\documentclass{amsart}
\usepackage{mathtools,amsmath,amssymb,mathtools,amsthm,overpic,graphicx,calligra,color,geometry}
\usepackage{enumitem}
\usepackage{mathrsfs}
\usepackage{comment}
\usepackage{hyperref}
\usepackage{extarrows}
\usepackage[backend=biber, style=alphabetic]{biblatex}
\addbibresource{merobib7.bib}
\usepackage{geometry}
\newtheorem{thm}{Theorem}
\newtheorem*{result}{Result}

\newtheorem{lem}[thm]{Lemma}
\newtheorem{prop}[thm]{Proposition}
\theoremstyle{definition}
\newtheorem{definition}[thm]{Definition}
\theoremstyle{definition}
\setlength{\parindent}{0.25in}
\setlength{\parskip}{1pt}

\oddsidemargin=-0.2in \evensidemargin=-0.2in \textwidth=6.6in

\begin{document}
\title{Meromorphic convexity on complex manifolds}
\author{Blake J. Boudreaux}
\address{Department of Mathematical Sciences, University of Arkansas, Fayetteville, AR 72701, USA}
\email{bb225@uark.edu}
\author{Rasul Shafikov}
\address{Department of Mathematics, University of Western Ontario,  London, Ontario N6A 5B7, Canada}
\email{shafikov@uwo.ca}
\date{\today}

\begin{abstract}
The notion of meromorphic convexity is defined and studied on complex manifolds. Using this notion,
in analogy with Stein manifolds, a new class of complex manifolds, called {\calligra M }-manifolds, is introduced.
This is a class of complex manifolds with a good supply of global meromorphic functions, in particular, it includes
all Stein manifolds and projective manifolds. It is also shown that there exist noncompact complex manifolds,
known as long $\mathbb C^2$, that are {\calligra M }-manifolds but do not contain any nonconstant holomorphic
functions.
\end{abstract}
\maketitle

\section{Introduction}
Holomorphic convexity is an important property of complex manifolds, and one of the defining characteristics of Stein manifolds.
Holomorphically convex manifolds are similar to Stein manifolds, with the exception that they are allowed to admit compact analytic varieties of positive dimension. In fact, through the {\it Remmert reduction} there is a unique, up to isomorphism, Stein space $Y$ that can be associated with a holomorphically convex manifold $X$. The Stein space $Y$ has the property that $\mathscr{O}(X)\cong\mathscr{O}(Y)$, and so holomorphic function theory on $X$ reduces to the holomorphic function theory on a Stein space $Y$ by passing through the quotient map. In particular, this can be used to prove the Oka--Weil theorem on holomorphically convex complex manifolds~\cite[Theorem 3.2]{Mo20}.

The purpose of this paper is to initiate the development of an analogous theory for \textit{meromorphic} functions on a complex manifold $X$. Let $\mathscr{O}_X$ denote the sheaf of germs of holomorphic functions on $X$. Through a formal algebraic construction, $\mathscr{O}_X$ gives rise to $\mathscr{M}_X$, the sheaf of quotients of $\mathscr{O}_X$, called the \textit{sheaf of germs of meromorphic functions on $X$}. A \textit{meromorphic function} on an open set $\Omega\subseteq X$ is defined to be a section of $\mathscr{M}_X$ on $\Omega$, and the space of meromorphic functions on an open set $\Omega$ is denoted $\mathscr{M}(\Omega)$. By construction, it follows that, given $f\in\mathscr{M}(\Omega)$, every point of $\Omega$ admits a neighborhood $U$ on which $f=u/v$, where $u,v\in\mathscr{O}(U)$ and $\gcd(u,v)=1$. If $X$ is Stein, then any $f\in\mathscr{M}(X)$ admits a representation $f=u/v$ for globally defined $u,v\in\mathscr{O}(X)$, and we can further demand that $\gcd(u,v)=1$ if and only if $X$ additionally satisfies the topological condition $H^2(X,\mathbb{Z})=\{0\}$, see~\cite{Ep78} and~\cite{BoSh25} for further details. However,
no such global representation exists in general.

To every nonzero meromorphic function $f$ we can associate a divisor of zeroes and a divisor of poles, denoted by $Z(f)$ and $P(f)$, respectively. The divisors $Z(f)$ and $P(f)$ are precisely those with support $\overline{\{f=0\}}$ and $\overline{\{1/f=0\}}$, respectively, and with coefficients determined by the multiplicities of $f$ and $1/f$, respectively. Indeed, in view of~\cite[Theorem 1.4.4]{Ch89}, the closures of the complex analytic sets $\{f=0\}$ and $\{1/f=0\}$ extend to complex hypersurfaces on the complex manifold.

If $f$ is a meromorphic function on a complex manifold $X$, then the set $Z(f)\cap P(f)$ forms a complex analytic set of complex codimension at least two. When $\dim_{\mathbb{C}}(X)=1$, this means that $Z(f)\cap P(f)$ is empty, and so $f$ can be viewed as a holomorphic mapping from $X$ into $\mathbb{CP}^1$. On the other hand, if $\dim_{\mathbb{C}}(X)>1$, then $f$ has no well-defined value at points of $Z(f)\cap P(f)$. The set $Z(f)\cap P(f)$ is called the \textit{set of indeterminacy points of $f$} and is denoted $\mathcal{I}(f)$. From this point of view, meromorphic functions in higher dimensions are truly meromorphic objects.

In Section~\ref{s.mc} we give general properties of meromorphically convex hulls and define the notion of meromorphic convexity for
complex manifolds. In Section~\ref{s.mm} we introduce a new class of complex manifolds---dubbed {\calligra M }-manifolds---which should be considered as a meromorphic analogue of Stein manifolds. Section~\ref{s.ow} discusses some variations of the classical Oka--Weil theorem.
One of the principal results of the paper, the existence of long $\mathbb C^2$ that are {\calligra M }-manifolds, is the content of Section~\ref{s.longc2}. Combined with the work of Boc Thaler and Forstneri\v c~\cite{BTFo16}, this gives an example of an {\calligra M }-manifold that
contains no nonconstant holomorphic functions. Finally, in the last section we prove some additional results concerning meromorphic functions on certain holomorphically and meromorphically convex manifolds.

Throughout this paper we assume that all manifolds are second countable, and connected unless otherwise specified.

\smallskip

\noindent{\bf Acknowledgments.} We would like to thank the anonymous referee for valuable comments which have improved the quality and exposition of the paper. The second author is partially supported by Natural Sciences and Engineering Research Council of Canada.

\section{Meromorphic Convexity}\label{s.mc}

The notion of meromorphic convexity generalizes that of rational convexity on $\mathbb C^n$ and the
complex projective space $\mathbb{CP}^n$. In this section $X$ is an arbitrary complex manifold.

\begin{definition}\label{MeroCvx}
Given a compact set  $K\subseteq X$, define
\begin{equation}\label{hull}
	\widehat{K}_X=\left\{z\in X: |f(z)|\leq\|f\|_K \text{ for every } f\in\mathscr{M}(X)\cap\mathscr{O}(K\cup \{z\}) \right\}.
\end{equation}
\end{definition}

Note that the space $\mathscr{M}(X)\cap\mathscr{O}(K\cup \{z\})$ is never empty as it contains constant functions.
We call $\widehat{K}_X$ the \textit{meromorphically convex hull} of $K$.
When there is no chance of confusion, the subscript on the hull may be omitted. Note that in the literature, $\widehat K$ often denotes
the polynomially or holomorphically convex hull of~$K$. For convenience, in this paper
we use this notation for meromorphically convex hulls.

\noindent\textbf{Remark.} The meromorphically convex hull of $K$, as defined in Definition~\ref{hull}, differs formally from the definition given in~\cite{BoSh25}, where it is described as
\[
	\mathcal{M}\text{-hull}(K)=\left\{z\in X:|f(z)|\leq \|f\|_K\text{ for all $f\in\mathscr{M}(X)$ with $\mathcal{I}(f)\cap (K\cup\{z\})=\varnothing$}\right\}.
\]
Nonetheless, for every compact set $K$, one has $\mathcal{M}\text{-hull}(K)=\widehat{K}_X$, and hence the two notions coincide. Indeed, the inclusion $\mathcal{M}\text{-hull}(K)\subseteq\widehat{K}_X$ is immediate.

For converse, suppose $z\not\in\mathcal{M}\text{-hull}(K)$. Then there exists a meromorphic function $f$ such that
\[
	\mathcal{I}(f)\cap (K\cup\{z\})=\varnothing\qquad\text{and}\qquad |f(z)|>\|f\|_K.
\]
If $z\not\in P(f)$, then $f\in\mathscr{M}(X)\cap\mathscr{O}(K\cup\{z\})$, and it follows that $z\not\in\widehat{K}_X$.

If instead $z\in P(f)$, one can compose $f$ with a M\"obius transformation $T:\mathbb{CP}^1\to\mathbb{CP}^1$ that is sufficiently close to the identity and satisfies $|T(\infty)|<\infty$. Then $T\circ f$ is a meromorphic function which is holomorphic on $K\cup\{z\}$ and still satisfies $|T(f(z))|>\|T\circ f\|_K$, reducing the argument to the previous case.

The modification of the definition is thus purely cosmetic and serves only to streamline the proofs.

We also identify the following set associated with the meromorphically convex hull.

\begin{definition}\label{InnerHull}
Let
\begin{equation}\label{innerhull}
	\widetilde{K}_X=\left\{z\in X:\text{for every $f\in\mathscr{M}(X)\cap\mathscr{O}(K)$
	it follows that $f\in\mathscr{O}_z$ and $|f(z)|\leq\|f\|_K$}\right\}.
\end{equation}
We call $\widetilde{K}_X$ the \textit{inner hull} of $K$.
\end{definition}

\noindent Clearly, $\widetilde K_X \subseteq \widehat K_X$, with equality holding for all known examples.
However, without any additional information about the space of meromorphic functions we cannot establish the identity
$\widetilde K_X = \widehat K_X$ for all complex manifolds. It is immediate that, on $X=\mathbb C^n$, the meromorphically convex
hull for any compact set $K$ agrees with its rationally convex hull, which also agrees with the convex hull of $K$ with respect to
complex hypersurfaces, which is defined as follows (cf.~\cite{BoSh25,Co99,Hi71}).

\begin{definition}
	Let $X$ be a complex manifold and let $K\subset X$ be a compact set. Define
\[
h(K)=\{z\in X:\text{ every complex hypersurface in $X$ passing through $z$ intersects $K$}\}.
\]
We say that $K$ is \textit{convex with respect to hypersurfaces} if $h(K)=K$, and that $X$ is \textit{convex with respect to hypersurfaces}
if $h(K)$ is compact whenever $K$ is compact.
\end{definition}

\noindent  We now collect some general properties of meromorphic  hulls and inner hulls on complex manifolds.

\begin{prop}\label{BasicProps}
	Let $X$ be a complex manifold, and $K\subset X$ be a compact set. Then
	\begin{enumerate}
		\item[(i)] $\widehat K_X$ is a closed set.
		\item[(ii)] $\widetilde K_X  \subseteq \widehat K_X$, and
			$(\widehat K_X)^\circ  \subseteq\widetilde K_X$.
		\item[(iii)] $f\in\mathscr{O}(\widetilde K_X)$ whenever $f\in\mathscr{M}(X)\cap\mathscr{O}(K)$.
		\item[(iv)] $h(K)\subseteq\widetilde{K}_X$.
	\end{enumerate}
\end{prop}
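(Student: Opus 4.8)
The plan is to verify the four items in order, using throughout that the polar set $|P(f)|$ and the zero set $|Z(f)|$ of a meromorphic function $f$ are complex hypersurfaces, together with the Riemann removable singularity theorem across an analytic hypersurface. For (i), I would take a sequence $z_n\to z$ with $z_n\in\widehat K_X$ and an arbitrary $f\in\mathscr M(X)\cap\mathscr O(K\cup\{z\})$. Since $f$ is holomorphic on an open neighborhood $U$ of $K\cup\{z\}$, the points $z_n$ eventually lie in $U$, so $f\in\mathscr M(X)\cap\mathscr O(K\cup\{z_n\})$ for large $n$ and hence $|f(z_n)|\le\|f\|_K$; letting $n\to\infty$ and using continuity of $f$ gives $|f(z)|\le\|f\|_K$, so $z\in\widehat K_X$ and $\widehat K_X$ is closed.

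For the first inclusion in (ii), any $f\in\mathscr M(X)\cap\mathscr O(K\cup\{z\})$ is in particular holomorphic near $K$, so membership of $z$ in $\widetilde K_X$ forces $|f(z)|\le\|f\|_K$; hence $\widetilde K_X\subseteq\widehat K_X$. The substantive inclusion is $(\widehat K_X)^\circ\subseteq\widetilde K_X$. Writing $W=(\widehat K_X)^\circ$ and fixing $f\in\mathscr M(X)\cap\mathscr O(K)$, I would note that on the dense open set $W\setminus|P(f)|$ the function $f$ is holomorphic and every point there lies in $\widehat K_X$, so applying the hull condition to $f$ itself yields $|f|\le\|f\|_K$ throughout $W\setminus|P(f)|$. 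Thus $f$ is bounded near the hypersurface $|P(f)|\cap W$, and the Riemann removable singularity theorem lets it extend holomorphically across this set; the extension agrees with $f$ off a hypersurface, so $f$ is holomorphic on all of $W$ with $|f|\le\|f\|_K$. In particular $f\in\mathscr O_z$ and $|f(z)|\le\|f\|_K$ for every such $f$, giving $z\in\widetilde K_X$.

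Item (iii) is then immediate from the definition of $\widetilde K_X$: each $f\in\mathscr M(X)\cap\mathscr O(K)$ satisfies $f\in\mathscr O_z$ at every $z\in\widetilde K_X$, and gluing the local neighborhoods of holomorphy produces a single open neighborhood of $\widetilde K_X$ on which $f$ is holomorphic, i.e.\ $f\in\mathscr O(\widetilde K_X)$. For (iv) I would argue through hypersurfaces: given $z\in h(K)$ and $f\in\mathscr M(X)\cap\mathscr O(K)$, if $f$ failed to be holomorphic at $z$ then $z$ would lie on the hypersurface $|P(f)|$, which is disjoint from $K$ since $f$ is holomorphic near $K$, contradicting $z\in h(K)$; and if $|f(z)|>\|f\|_K$, then with $c=f(z)$ the hypersurface $|Z(f-c)|$ passes through $z$ yet misses $K$ (as $|f|\le\|f\|_K<|c|$ on $K$), again contradicting $z\in h(K)$. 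Hence $f\in\mathscr O_z$ and $|f(z)|\le\|f\|_K$, so $z\in\widetilde K_X$ and $h(K)\subseteq\widetilde K_X$.

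The main obstacle is the second inclusion in (ii): one must exclude poles and indeterminacy points of $f$ inside $(\widehat K_X)^\circ$, and the crux is that the hull bound forces $f$ to be bounded off its polar hypersurface, so that Riemann extension applies and upgrades the pointwise bound to holomorphy on the whole interior. The remaining items reduce quickly to the hypersurface structure of $|P(f)|$ and $|Z(f-c)|$ and to unwinding the definitions.
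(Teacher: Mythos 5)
Your proof is correct, and items (i), (iii) and (iv) are essentially the paper's arguments (the paper phrases (i) as openness of the complement and (iv) contrapositively, but the content is the same). The genuinely different step is the inclusion $(\widehat K_X)^\circ\subseteq\widetilde K_X$. The paper argues pointwise: if $p\in\widehat K_X$ were a pole of some $f\in\mathscr{M}(X)\cap\mathscr{O}(K)$, it post-composes $f$ with a linear-fractional map $\sigma$ sending a value $\zeta\notin f(K)$ to $\infty$ with $|\sigma(\infty)|>\|\sigma\|_{f(K)}$, so that $\sigma\circ f$ lies in $\mathscr{M}(X)\cap\mathscr{O}(K\cup\{p\})$ and violates the hull inequality at $p$; indeterminacy points are then handled by perturbing to a nearby pole inside the interior. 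You instead bound $f$ by $\|f\|_K$ on the dense open set $(\widehat K_X)^\circ\setminus P(f)$ using the hull condition, then apply the Riemann extension theorem across the polar hypersurface and identify the extension with $f$ via the identity principle for meromorphic functions. Both routes are sound. Yours is more uniform (poles and indeterminacy points are removed in one stroke) but uses interiority essentially, since the bound on a full deleted neighborhood is what feeds the extension theorem; the paper's M\"obius trick yields the slightly stronger observation that no point of $\widehat K_X$ at all, interior or not, can be a non-indeterminate pole of such an $f$, so that only indeterminacy points can distinguish $\widehat K_X$ from $\widetilde K_X$. One pedantic remark on your (iv): the relevant hypersurface is the closure $\overline{\{f=c\}}$, which may pick up additional points lying in $P(f)$; these also avoid $K$ because $f\in\mathscr{O}(K)$, so your conclusion stands.
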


\begin{proof}
(i) If any meromorphic function on $X$ that is holomorphic on $K$ is constant, then $\widehat K_X = X$ and there is nothing to prove. Otherwise, suppose
$p \in X \setminus \widehat K_X$. If for every nonconstant $f\in \mathscr{M}(X) \cap \mathscr O(K)$ the point $p$ were either a pole or
an indeterminacy of $f$, then $p$ would be in $\widehat K_X$. So there exists a nonconstant
$f \in \mathscr{M}(X) \cap \mathscr O(K\cup \{p\})$, and
we have $|f(p)| > ||f||_K$. This shows that the complement of $\widehat K_X$ is open.

(ii) The inclusion $\widetilde K_X  \subseteq \widehat K_X$ is obvious. To prove the second inclusion, let
 $p \in (\widehat K_X)^\circ$. Then for any $f \in \mathscr M(X) \cap \mathscr O(K)$ we have $f \in \mathscr O_p$. Indeed, suppose first that
	$p\not\in\mathcal{I}(f)$ is a pole for $f$. Then there exist $\zeta \in \mathbb C \setminus f(K)$ and a linear-fractional transformation $\sigma$ of $\mathbb{CP}^1$ such that
$\sigma (\zeta) = \infty \in \mathbb{CP}^1$ and $|\sigma (\infty) | > ||\sigma||_{f(K)}$. The meromorphic function $\tilde f = \sigma \circ f$
is holomorphic on $\mathscr O(K\cup \{p\})$ and satisfies $|\tilde f(p)| > ||\tilde f||_K$. But this contradicts $p \in \widehat K_X$.
The remaining possibility is that~$p$ is an indeterminacy point of~$f$. In this case, since $p$ is an interior point of $\widehat K_X$, there exists a point
$q$ sufficiently close to $p$ which is a pole of $f$, and we obtain a contradiction as above. This shows that $f \in \mathscr M(X) \cap \mathscr O(K)$
implies $f \in \mathscr O_p$. Then by the definition of meromorphic convexity we have $|f(p)| \le ||f||_K$ and therefore, $p \in \widetilde K_X$.

(iii) This follows from the definition of inner hull.

(iv) Let $z\in X\setminus\widetilde{K}_X$. Then either
there exists $f\in\mathscr{M}(X)\cap\mathscr{O}(K)$ with $f\not\in\mathscr{O}_z$ or $f \in \mathscr{O}_z$ and $|f(z)|>\|f\|_K$. If $f\not\in\mathscr{O}_z$, then $z\in P(f)$, the divisor of poles of the function $f$. Since $f\in\mathscr{O}(K)$, $P(f)$ cannot intersect $K$ and it follows that $P(f)$ is the desired hypersurface. If $f\in\mathscr{O}_z$ and $|f(z)|>\|f\|_K$, then the hypersurface $\overline{f^{-1}(f(z))}$ suffices.
\end{proof}

The following proposition describes the extension property of  $\widetilde{K}_X$ on general complex manifolds.
For a compact set $K$, we
denote by $\overline{\mathscr{M}(X)}_{\mathcal{C}(K)}$ the closure, in the uniform norm on $K$, of the space of meromorphic functions on $X$ whose pole divisor does not intersect $K$.

\begin{prop}\label{p.ext} Let $X$ be a complex manifold and let $K\subseteq X$ be compact.
    \begin{enumerate}
	\item[(i)] The inner hull $\widetilde{K}_X$ is the largest set to which all $f\in\mathscr{M}(X)\cap\mathscr{O}(K)$ extend holomorphically.
	\item[(ii)] Every $f\in\overline{\mathscr{M}(X)}_{C(K)}$ extends naturally to a unique function $\widetilde{f}\in\overline{\mathscr{M}(X)}_{\mathcal{C}(\widetilde{K}_X)}$ that satisfies $||\widetilde f||_{\widetilde K_X} = ||f||_K$.
    \end{enumerate}
\end{prop}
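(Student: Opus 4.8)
The plan is to isolate one norm identity and then feed it into both parts. \emph{Preliminary step.} For any $g\in\mathscr{M}(X)$ whose polar divisor is disjoint from $K$, I claim $\|g\|_{\widetilde K_X}=\|g\|_K$. Indeed, such a $g$ lies in $\mathscr{M}(X)\cap\mathscr{O}(K)$, so Proposition~\ref{BasicProps}(iii) shows $g$ is holomorphic on $\widetilde K_X$, while Definition~\ref{InnerHull} gives $|g(z)|\le\|g\|_K$ for every $z\in\widetilde K_X$; since $K\subseteq\widetilde K_X$, the two inequalities combine to the asserted equality.

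For (i), I would prove that the set $\Omega$ of points of $X$ to which every $f\in\mathscr{M}(X)\cap\mathscr{O}(K)$ extends holomorphically equals $\widetilde K_X$. The inclusion $\widetilde K_X\subseteq\Omega$ is exactly Proposition~\ref{BasicProps}(iii). The reverse inclusion carries the content: if $z\in\Omega$, then every $f$ is holomorphic at $z$, so the only way $z$ could fail to lie in $\widetilde K_X$ is that some $f_0$, holomorphic at $z$, violates the bound, $|f_0(z)|>\|f_0\|_K$. Setting $c=f_0(z)$, the function $f_0-c$ is nonvanishing on $K$ (there $|f_0-c|\ge|c|-\|f_0\|_K>0$) but vanishes at $z$, so $1/(f_0-c)\in\mathscr{M}(X)\cap\mathscr{O}(K)$ has a pole at $z$ and does not extend holomorphically there, contradicting $z\in\Omega$. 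Hence $\Omega=\widetilde K_X$, which is the maximality assertion.

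For (ii), given $f\in\overline{\mathscr{M}(X)}_{C(K)}$ I would pick $f_n\in\mathscr{M}(X)$ with poles off $K$ and $f_n\to f$ uniformly on $K$. Each $f_n-f_m$ again has poles off $K$, so the preliminary step yields $\|f_n-f_m\|_{\widetilde K_X}=\|f_n-f_m\|_K$; thus $(f_n)$ is uniformly Cauchy on $\widetilde K_X$ and converges there to a continuous limit $\widetilde f$ with $\widetilde f|_K=f$. Because each $f_n$ is holomorphic on $\widetilde K_X$ by Proposition~\ref{BasicProps}(iii), its poles also avoid $\widetilde K_X$, so $\widetilde f\in\overline{\mathscr{M}(X)}_{\mathcal{C}(\widetilde K_X)}$; passing to the limit in $\|f_n\|_{\widetilde K_X}=\|f_n\|_K$ gives $\|\widetilde f\|_{\widetilde K_X}=\|f\|_K$. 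For uniqueness, two extensions differ by an $h\in\overline{\mathscr{M}(X)}_{\mathcal{C}(\widetilde K_X)}$ with $h|_K=0$; approximating $h$ uniformly on $\widetilde K_X$ by functions with poles off $\widetilde K_X$ (hence off $K$) and applying the preliminary step forces $\|h\|_{\widetilde K_X}=\|h\|_K=0$, so $h\equiv 0$.

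The essential point, and the only step that is more than bookkeeping, is the preliminary norm identity, which converts control of a function on $K$ into control on all of $\widetilde K_X$. The one subtlety to watch is that $\widetilde K_X$ need not be compact, so in (ii) I must verify that the approximants have poles off $\widetilde K_X$ itself, not merely off $K$; this is precisely the extension property recorded in Proposition~\ref{BasicProps}(iii), and it is what allows the uniform limit to remain in $\overline{\mathscr{M}(X)}_{\mathcal{C}(\widetilde K_X)}$.
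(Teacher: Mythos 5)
Your proof is correct and follows essentially the same route as the paper's: the reciprocal trick $w\mapsto 1/(f_0(w)-f_0(z))$ for part (i), and a Cauchy-sequence argument driven by the hull inequality for part (ii). Your preliminary norm identity, applied to the differences $f_n-f_m$, makes explicit the uniform Cauchyness on $\widetilde K_X$ that the paper's proof only asserts via pointwise convergence, and your uniqueness argument supplies a detail the paper leaves implicit.
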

\begin{proof}
	(i) Let $L$ be the largest set in $X$ with the property that $f\in\mathscr{O}(L)$ for all $f\in\mathscr{M}(X)\cap\mathscr{O}(K)$. By
	Proposition~\ref{BasicProps}(iii), we see that $\widetilde{K}\subseteq L$. If $z\not\in\widetilde{K}$, then there exists a
	$g\in\mathscr{M}(X)\cap\mathscr{O}(K)$ with $g\not\in\mathscr{O}_z$ or $|g(z)|> \|g\|_K$. In the former case, we see that $z\not\in L$.
	In the latter case, we can assume $g\in\mathscr{O}_z$. If we set $p=g(z)$, then the assignment
\[
	w\mapsto\frac{1}{g(w)-p}
\]
	produces a member $h\in\mathscr{M}(X)\cap\mathscr{O}(K)$ which does not extend holomorphically to a neighborhood of $z$. We conclude that $\widetilde{K}=L$.

	(ii) If $f\in\overline{\mathscr{M}(X)}_{\mathcal C(K)}$, then there exists is a sequence $\{f_j\}\subseteq\mathscr{M}(X)\cap \mathscr O(K)$ converging uniformly to $f$ on $K$. Pick a point $z\in\widetilde K_X$. Then $f_j \in \mathscr O_z$,  and $|f_j(z)|\leq \|f_j\|_K$, hence $\{f_j(z)\}\subseteq\mathbb{C}$ is a Cauchy sequence. Set $\widetilde{f}(z)$ to be the limit of this sequence. By uniform convergence, $\widetilde{f}$ is continuous and we see that
	$\widetilde{f}\in\mathscr{M}(K)$. This also proves the equality of the two norms.
\end{proof}

Next, we define meromorphically convex manifolds.

\begin{definition}\label{MeroCvxM}
A complex manifold $X$ is called \textit{meromorphically convex} if
$\widehat{K}_X\subseteq X$ is compact for every compact set $K\subseteq X$.
\end{definition}

Clearly, $\mathbb C^n$ is meromorphically convex with the meromorphically convex hull and the inner hull both being equal to the rationally convex hull for all compact subsets. The next proposition gives basic properties of the hulls on meromorphically convex manifolds.

\begin{prop}\label{MerCvxProp}
Let $X$ and $Y$ be meromorphically convex complex manifolds, and let $K\subset X$ be a compact set. Then
	\begin{enumerate}[resume]
		\item [(i)] If $\widetilde K = \widehat K$, then $\widehat{(\widehat K_X)}_X = \widehat K_X$.
		\item[(ii)] If $X$ and $Y$ are two open manifolds in some ambient complex manifold, and $X\cap Y$ connected,
			then $X \cap Y$ is meromorphically convex.
		\item[(iii)] $ X\times Y$ is meromorphically convex.
\end{enumerate}
\end{prop}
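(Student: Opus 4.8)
The plan is to prove all three parts by the same mechanism: establish that the hull in question is contained in a suitable compact set, and then invoke Proposition~\ref{BasicProps}(i), which says every meromorphically convex hull is closed in its ambient manifold. The elementary principle that a closed subset of a compact set is compact then does the work, so the actual content is the three containments of hulls, each obtained by testing against a conveniently chosen supply of meromorphic functions.

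For (i), note first that meromorphic convexity of $X$ makes $\widehat K_X$ compact, so it is a legitimate set to re-hull. One checks $K\subseteq\widetilde K_X$ directly from the definitions, and then Proposition~\ref{BasicProps}(iii) together with the hypothesis $\widetilde K=\widehat K$ shows that every $f\in\mathscr{M}(X)\cap\mathscr{O}(K)$ is holomorphic on $\widehat K_X$ with $\|f\|_{\widehat K_X}=\|f\|_K$ (the bound $\le$ comes from the inner-hull estimate $|f(z)|\le\|f\|_K$ valid on $\widetilde K_X$, and $\ge$ from $K\subseteq\widehat K_X$). Since $L\subseteq\widehat L_X$ for every compact $L$, we get $\widehat K_X\subseteq\widehat{(\widehat K_X)}_X$. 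For the reverse inclusion, take $z\in\widehat{(\widehat K_X)}_X$ and an arbitrary $f\in\mathscr{M}(X)\cap\mathscr{O}(K\cup\{z\})$; its holomorphy on $K$ lets it extend holomorphically across $\widehat K_X$, so $f\in\mathscr{M}(X)\cap\mathscr{O}(\widehat K_X\cup\{z\})$, and the defining inequality gives $|f(z)|\le\|f\|_{\widehat K_X}=\|f\|_K$. Hence $z\in\widehat K_X$, and (i) follows.

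For (ii), write $M$ for the ambient manifold. The key point is that restriction induces maps $\mathscr{M}(X)\to\mathscr{M}(X\cap Y)$ and $\mathscr{M}(Y)\to\mathscr{M}(X\cap Y)$; testing $\widehat K_{X\cap Y}$ against these restricted functions yields $\widehat K_{X\cap Y}\subseteq\widehat K_X\cap\widehat K_Y$. Since $X$ and $Y$ are meromorphically convex, $\widehat K_X$ and $\widehat K_Y$ are compact, hence compact in $M$, so $\widehat K_X\cap\widehat K_Y$ is a compact subset of $X\cap Y$; as $\widehat K_{X\cap Y}$ is closed in $X\cap Y$ and trapped inside it, it is compact. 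For (iii), let $\pi_X,\pi_Y$ be the coordinate projections and set $K_1=\pi_X(K),\,K_2=\pi_Y(K)$, both compact. Pulling functions back via $g\mapsto g\circ\pi_X$ and $h\mapsto h\circ\pi_Y$, and using $\|g\circ\pi_X\|_K=\|g\|_{K_1}$, one checks that $(z,w)\in\widehat K_{X\times Y}$ forces $z\in\widehat{(K_1)}_X$ and $w\in\widehat{(K_2)}_Y$, i.e. $\widehat K_{X\times Y}\subseteq\widehat{(K_1)}_X\times\widehat{(K_2)}_Y$. The right-hand side is a product of compact sets, hence compact, and the same closed-in-compact argument finishes the proof.

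The routine parts are the verifications that restrictions and pullbacks of meromorphic functions are again meromorphic and stay holomorphic on the relevant sets. The only place a hypothesis is genuinely used is (i): the equality $\|f\|_{\widehat K_X}=\|f\|_K$ for $f\in\mathscr{M}(X)\cap\mathscr{O}(K)$ can fail without $\widetilde K=\widehat K$, since points of $\widehat K_X$ might be poles or indeterminacy points of such $f$, and then the extension step collapses. I expect the main subtlety in (ii) and (iii) to be purely topological bookkeeping: one must verify that the trapping set (an intersection, respectively a product, of hulls) is compact as a subset of $X\cap Y$, respectively $X\times Y$ — which is exactly where meromorphic convexity of $X$ and $Y$ enters — and that closedness of the inner hull is read relative to the correct manifold.
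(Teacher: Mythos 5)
Your proposal is correct and follows essentially the same route as the paper: part (i) rests on the norm-preserving extension of every $f\in\mathscr{M}(X)\cap\mathscr{O}(K)$ to $\widehat K_X$ furnished by $\widetilde K=\widehat K$ and Proposition~\ref{BasicProps}(iii) (you phrase it directly where the paper argues by contradiction, but the mechanism is identical), and parts (ii) and (iii) use exactly the paper's trapping sets $\widehat K_X\cap\widehat K_Y$ and $\widehat{(K_1)}_X\times\widehat{(K_2)}_Y$ together with closedness of the hull from Proposition~\ref{BasicProps}(i). Your explicit verification of the pullback/restriction steps only fills in what the paper labels as ``easy to see.''
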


\begin{proof}
(i) The inclusion $\widehat{K}_X\subseteq\widehat{(\widehat{K}_X)}_X$ is trivial.

For the opposite inclusion, let $z \in \widehat{(\widehat K_X)}_X$ be an arbitrary point.
Arguing by contradiction, suppose that $z\notin \widehat K_X$. This implies that there exists  a function
$f\in \mathcal M(X) \cap \mathcal O (K)$ such that $f \in \mathcal O_z$ and $|f(z)| > ||f||_K$.
Since $\widetilde K_X = \widehat K_X$,  by Proposition~\ref{BasicProps}(iii),, any $f \in \mathcal M(X) \cap \mathcal O (K)$
extends holomorphically to $\widehat K$ with $||f||_K = ||f||_{\widehat K}$. It follows that $|f(z)| > ||f||_{\widehat K}$,
which means that $z \notin \widehat{(\widehat K_X)}_X$. This contradiction proves $\widehat{\widehat{K}} \subset \widehat{K}$.

(ii) For any compact set $K \subseteq X \cap Y$, the set $\widehat K_{X\cap Y}$ is closed by Proposition~\ref{BasicProps}(i). We claim that
$\widehat K_{X\cap Y} \subseteq \widehat K_X$. Indeed, if $z \in (X\cap Y) \setminus\widehat K_X$, then there exists $f \in \mathscr M(X) \cap \mathscr O(K\cup \{z\})$ such that
	$|f(z)|> ||f||_K$. By viewing $f$ as a meromorphic function on $X \cap Y$, we see that $z \notin \widehat K_{X\cap Y}$.
Thus,
$\widehat K_{X\cap Y} \subset [\widehat K_X \cap (X\cap Y) ]$.
Similarly, $\widehat K_{X\cap Y} \subset [\widehat K_Y \cap (X\cap Y) ]$, and therefore,
$$
\widehat K_{X\cap Y} \subset \widehat K_X \cap \widehat K_Y.
$$
	By Proposition~\ref{BasicProps}(i), $\widehat{K}_{X\cap Y}$ is thus a compact subset of $\widehat{K}_X\cap\widehat{K}_Y$. Since $\widehat{K}_{X\cap Y}\subset X\cap Y$ by definition, $\widehat{K}_{X\cap Y}$ is also a compact subset of $X\cap Y$.

(iii) Let $K\subset X \times Y$ be a compact set. Let $\pi_X : X\times Y \to X$ and $\pi_Y : X\times Y \to Y$ be the natural projections, and let
$K_1 = \pi_X (K)$ and $K_2 = \pi_Y(K)$. Then it is easy to see that
$$
	\widehat K_{X\times Y} \subseteq \widehat{(K_1)}_X \times  \widehat{(K_2)}_Y.
$$
The set on the right-hand side of the above inclusion is compact, and since $\widehat K_{X\times Y}$ is closed, it is a compact subset of $X\times Y$.
\end{proof}

There are plenty of examples of meromorphically convex manifolds. Since every holomorphic function is meromorphic, any holomorphically convex complex manifold is meromorphically convex, in particular, any Stein manifold is meromorphically convex. All compact complex manifolds are trivially meromorphically convex, in particular all projective manifolds are meromorphically convex. Further, the Cartesian product of a Stein manifold and a projective manifold is holomorphically convex, hence meromorphically convex.

Our next goal is to give some additional properties of the inner hulls for certain manifolds.

\begin{prop}\label{steinProject} The following holds
\begin{enumerate}
    \item[(i)] On any Stein manifold $X$, we have $\widehat K_X=\widetilde{K}_X= h(K)$ for any compact $K \subset X$.
\item[(ii)] For any compact $K \subset \mathbb{CP}^n$, $\widetilde K_X =\widehat K_X$, which also agrees with the rationally convex hull of $K$ in
$\mathbb{CP}^n$.
\item[(iii)] Let $X \subset \mathbb{CP}^N$ be a projective manifold and let $K\subset X$ be a compact set which omits an positive divisor; i.e., there exists an ample holomorphic line bundle $A$ and a $\sigma\in H^0(X,A)$ with $\sigma^{-1}(0)\cap K=\varnothing$. Then $\widetilde K=\widehat K$
\end{enumerate}
\end{prop}

\begin{proof}
	(i) By Proposition~\ref{BasicProps} we already have the inclusion $h(K)\subset \widetilde K \subset \widehat K$.
Since $X$ is Stein, by Proposition~1.2 of~\cite{BoSh25} we have $\widehat{K}=h(K)$, which proves the required statement.

(ii) This follows from the fact that on $\mathbb{CP}^n$ meromorphic functions are precisely rational functions and rational convexity is equivalent to the convexity with respect to complex hypersurfaces or with respect to positive divisors~\cite[Lemma 2.2]{Gu99}.

(iii) It follows from Proposition~\ref{BasicProps}(ii) that $\widetilde K \subset \widehat K$. To prove the reverse inclusion, suppose that $p \in X \setminus \widetilde K$. Then there exists a function $f \in \mathcal M(X) \cap \mathcal O(K)$ such that $p$ is either a pole or an indeterminacy point of $f$. If $p$ is a pole, by composing $f$ with a M\"obius transformation of $\mathbb{CP}^1$ which is sufficiently close to the identity, we see that $p\in X\setminus \widehat K$. Suppose now that $p$ is an indeterminacy point of $f$. Then the result immediately follows from the following lemma, which is also of independent interest.

\begin{lem}\label{l.g-p}
Let $X \subset \mathbb{CP}^N$ be a projective manifold and let $K \subset X$ be a compact subset which omits a positive divisor. Suppose $f \in \mathcal M(X) \cap \mathcal O(K)$ and $p\in X\setminus K$ is a point of indeterminacy for $f$. Then, there exists $g \in \mathcal M(X) \cap \mathcal O(K)$ such that $p$ is a genuine pole of $g$ (i.e., $g(p) = \infty$).
\end{lem}

\begin{proof}[Proof of Lemma~\ref{l.g-p}]
    Since the zero and pole divisors of $f$ are effective, there exist holomorphic line bundles $L_0\to X$ and $L_1\to X$ as well as holomorphic sections $s_0\in H^0(X,L_0)$ and $s_1\in H^0(X,L_1)$ such that $\text{div}(s_0)=P(f)$ and $\text{div}(s_1)=Z(f)$. Since $f$ and $s_1/s_0$ have the same divisor and $f$ is in particular a meromorphic section of the trivial bundle, it follows that $L_0=L_1=:L$. Furthermore, compactness of $X$ ensures that $s_1/s_0$ is a nonzero scalar multiple of $f$; absorbing this scalar into $s_1$ yields
\[
    f=\frac{s_1}{s_0}.
\]

Recall that $A$ is an ample line bundle and $\sigma$ is a holomorphic section of $A$ whose zero set avoids $K$. By the Kodaira embedding theorem, there exists a $k\in\mathbb{N}$ and a basis $\sigma_{0}=\sigma^k,\sigma_{1},\ldots,\sigma_N$ of $H^0(X,A^k)$ so that the map
\begin{equation*}
    \Phi(x)=[\sigma_{0}(x):\ldots:\sigma_N(x)]\in\mathbb{CP}^N
\end{equation*}
defines a holomorphic embedding of $X$ into a subvariety $Y$ of $\mathbb{CP}^N$ with $A^k=\Phi^*(\mathcal{O}_X(1)|_Y)$. The compact set $\Phi(K)$ thus avoids $\{[y_{0}:\ldots :y_N]\in\mathbb{CP}^N:y_{0}=0\}$, the hyperplane at infinity.

We seek a section $\tau \in H^0(X, A^{\otimes km})$ (for some $m \ge 1$) such that:
$$ \tau(x) \neq 0 \quad \forall x \in K, \quad \text{and} \quad \tau(p) \neq 0. $$
If $\sigma_0(p)\neq 0$, then we can take
$$
\tau = \sigma_0^m.
$$ 
Clearly, $\tau$ vanishes nowhere on $K \cup \{p\}$. The case $y_0(p)=0$ can be handled by a small perturbation of the hyperplane at infinity in $\mathbb{CP}^N$. 

Consider the twisted line bundle $\tilde{L} = L \otimes A^{\otimes km}$. Since $A$ is ample, the Cartan--Serre--Grothendieck Theorem (e.g., Lazarsfeld~\cite[Thm.~1.2.6]{La04}) the twisted bundle $\tilde{L} = L \otimes A^{\otimes km}$ is generated by its global sections for $m$ sufficiently large. We assume that $m$ above was chosen sufficiently large to satisfy this.
It follows that there exists a global section $S_1 \in H^0(X, \tilde{L})$ such that:
$S_1(p) \neq 0$.

We define the new denominator section as $S_0 = s_0 \otimes \tau$. 
Note that $S_0 \in H^0(X, L \otimes A^{\otimes km}) = H^0(X, \tilde{L})$.
We define the function $g$ as:
$$ g = \frac{S_1}{S_0} = \frac{S_1}{s_0 \otimes \tau}. $$
For any $x \in K$, we have $s_0(x) \neq 0$ (by hypothesis) and $\tau(x) \neq 0$ by construction. Thus, $S_0(x) \neq 0$ for all $x \in K$. Consequently, $g$ is holomorphic on $K$. Further,  $S_1(p) \neq 0$, and 
 $S_0(p) = s_0(p)\tau(p) = 0$.
Since the numerator is nonzero and the denominator is zero, $p$ is a pole of $g$. Specifically, $p \notin \mathcal{I}(g)$.
\end{proof}

This completes the proof of Proposition~\ref{steinProject}. 

\end{proof}

\section{{\calligra M }-manifolds}\label{s.mm}

In analogy with Stein manifolds, we consider the following class of complex manifolds.

\begin{definition}\label{M-manifold}
A complex manifold $X$ of dimension $n \ge 1$ is called an {\calligra M }-manifold if the following conditions are satisfied
\begin{enumerate}
    \item[(a)] $X$ is meromorphically convex, i.e, $\widehat K_X$ is a compact subset of $X$ for any compact $K \subset X$;
    \item[(b)] $\mathscr M(X)$ separates points, i.e., for any points $p,q \in X$, $p\ne q$, there exists a meromorphic function $f$ on $X$ such that $f$ is holomorphic near $p$ and $q$, and $f(p)\ne f(q)$;
    \item[(c)] Existence of local coordinates: for any point $p\in X$ there exists a neighbourhood $U$ of $p$ and meromorphic functions
	    $f_1,\dots, f_n$ such that $\{f_1|_U, \dots, f_n|_U\}$ form a local holomorphic coordinate system on $U$.
\end{enumerate}
\end{definition}

\noindent The following are basic examples concerning {\calligra M }-manifolds.

\begin{enumerate}

    \item Stein manifolds are {\calligra M }-manifolds. Indeed, on a Stein manifold meromorphic convexity as defined in Definition~\ref{M-manifold} agrees with weak meromorphic convexity defined in~\cite{BoSh25} (see the remark following Definition~\ref{MeroCvx}) which implies (a).  Properties (b) and (c) immediately follow from the Steinness of $X$. 

    \item Let $X$ be the complex manifold obtained by blowing up the origin in the unit ball in $\mathbb C^2$. Then $X$ is holomorphically convex but not Stein. Any holomorphic function on $X$ is constant on the exceptional divisor, and so for any point in the exceptional divisor there are no local coordinates that are formed by entire functions. However, $X$ is an {\calligra M }-manifold.

    \item Projective manifolds are {\calligra M }-manifolds, in particular, projective space is an {\calligra M }-manifold. Meromorphic convexity was shown in the previous section, and properties (b) and (c) follow from the fact that meromorphic functions on a projective manifold $X$ are the restriction of rational functions on the ambient projective space.

    	\item Any Riemann surface is an {\calligra M }-manifold. Indeed, a noncompact Riemann surface is a Stein manifold, and any compact Riemann surface is projective.

    \item A Cartesian product of a Stein manifold and a projective manifold is an  {\calligra M }-manifold, see Proposition~\ref{p.m-m} below.

    \item There exist long $\mathbb C^2$ that are not holomorphically convex. In fact, there exist long $\mathbb{C}^2$ which admit no nonconstant holomorphic functions, yet are {\calligra M }-manifolds, see the next section.

    \item By the Thimm--Siegel--Remmert theorem (see the next paragraph), there exist compact manifolds with no nonconstant meromorphic functions. In particular, there exists a Hopf manifold $\mathscr H$, $\dim_{\mathbb{C}}\mathscr{H}\geq 2$, which contains no nonconstant meromorphic functions. Therefore, $\mathscr H$ is (trivially) meromorphically convex but  $\mathscr H \setminus\{p\}$ is not.  For any Stein (or just meromorphically convex) manifold $X$, the manifold $\mathscr H \times X$ is meromorphically convex but not an {\calligra M }-manifold.
\end{enumerate}

Recall that by the result of Thimm~\cite{Thi}, Siegel~\cite{Si55}, and Remmert~\cite{Re56}, see also Andreotti--Stoll~\cite{AS}, for a compact complex manifold $X$, the meromorphic function
field $\mathcal{M}(X)$, viewed as a field extension over $\mathbb C$,  has transcendence degree $d$ satisfying
$0\le d\le \dim_{\mathbb C} X$. When $d=0$, the only meromorphic functions are constants, this case includes some Hopf manifolds and complex tori. Manifolds for which the transcendence degree of $\mathcal{M}(X)$ is $\dim_{\mathbb C} X$ are called Moishezon manifolds. This class includes all projective manifolds.


\begin{prop}\label{p.moish}
Let $X$ be a compact complex manifold. If $X$ is an {\calligra M }-manifold, then $X$ is Moishezon.
\end{prop}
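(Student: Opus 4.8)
The plan is to show that property (c) of Definition~\ref{M-manifold} alone forces $\mathscr{M}(X)$ to have the maximal possible transcendence degree over $\mathbb{C}$, after which the Thimm--Siegel--Remmert bound recalled above closes the argument. In particular, properties (a) and (b) play no role in this implication.

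First I would fix a point $p \in X$, set $n = \dim_{\mathbb{C}} X$, and apply condition (c) to produce a neighbourhood $U$ of $p$ together with meromorphic functions $f_1, \dots, f_n \in \mathscr{M}(X)$ whose restrictions to $U$ form a holomorphic coordinate system. By the very meaning of a coordinate system, each $f_i$ is holomorphic on $U$ and the map $F = (f_1, \dots, f_n) \colon U \to \mathbb{C}^n$ has nonvanishing holomorphic Jacobian on $U$. Equivalently, the meromorphic $n$-form $\omega = df_1 \wedge \cdots \wedge df_n$ is nonzero at $p$, and hence, since $X$ is connected, $\omega \not\equiv 0$ on $X$.

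The key step is the classical fact that $n$ meromorphic functions on a connected $n$-dimensional complex manifold whose differentials satisfy $df_1 \wedge \cdots \wedge df_n \not\equiv 0$ are algebraically independent over $\mathbb{C}$. I would prove this by contradiction. Suppose $P(f_1, \dots, f_n) \equiv 0$ for some nonzero polynomial $P$ of minimal total degree; since a nonzero constant cannot vanish on $(f_1,\dots,f_n)$, the polynomial $P$ is nonconstant. Differentiating the identity gives
\[
	\sum_{i=1}^{n} \frac{\partial P}{\partial w_i}(f_1, \dots, f_n)\, df_i \equiv 0 .
\]
Because each $\partial P/\partial w_i$ has strictly smaller degree than $P$, minimality of $P$ forces $(\partial P/\partial w_i)(f_1,\dots,f_n)\not\equiv 0$ whenever $\partial P/\partial w_i \not\equiv 0$, and at least one such partial derivative is nonzero. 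This yields a nontrivial $\mathscr{M}(X)$-linear relation among $df_1, \dots, df_n$, contradicting $\omega \not\equiv 0$.

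Finally, algebraic independence of $f_1, \dots, f_n$ shows that the transcendence degree of $\mathscr{M}(X)$ over $\mathbb{C}$ is at least $n$. As $X$ is compact, Thimm--Siegel--Remmert bounds this transcendence degree above by $\dim_{\mathbb{C}} X = n$, so equality holds and $X$ is Moishezon by definition. The one genuinely substantive point is the algebraic-independence lemma of the third paragraph; everything else is formal once condition (c) is unwound.
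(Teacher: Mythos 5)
Your proposal is correct and follows essentially the same route as the paper: use property (c) to produce $f_1,\dots,f_n$ with $df_1\wedge\cdots\wedge df_n\not\equiv 0$, differentiate a minimal polynomial relation to get a nontrivial $\mathscr{M}(X)$-linear dependence among the $df_i$, and conclude via the Thimm--Siegel--Remmert bound. The only cosmetic difference is that you take $P$ of minimal total degree while the paper normalizes by minimal degree in $z_n$ after assuming $f_1,\dots,f_{n-1}$ independent; both devices serve the same purpose of making the differentiated relation nontrivial.
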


\begin{proof}
The proof is a well-known argument, see, e.g.,~\cite[Thm 2.2.9]{MaMa}.
Suppose that the transcendence degree of $\mathcal M(X)$ is less than $n=\dim X$. This means that any $n$ meromorphic
functions $f_1, \dots, f_n$ on $X$ are algebraically dependent, i.e., there exists a nontrivial polynomial
$P \in \mathbb C[z_1, \dots, z_n]$
such that
\begin{equation}\label{e.pd}
P(f_1,\dots, f_n)=0 .
\end{equation}
 Without loss of generality we may assume that $f_1, \dots f_{n-1}$ are algebraically independent, and let $P$ be
 a nonzero polynomial of minimal degree in $z_n$ such that $P(f_1,\dots, f_n)=0$. Differentiation yields
 $$
\sum_{j=1}^n \frac{\partial P}{\partial z_j}(f_1,\dots, f_{n}) df_j =0,
$$
on the domain $U$ where all $f_j$ are holomorphic. This shows that the differentials $df_j$ are linearly dependent, i.e.,
$df_1 \wedge \dots \wedge df_{n} (z) = 0 $ for any $z\in U$. On the other hand, if  $X$ is an {\calligra M }-manifold, by
property (c), for any $p\in X$ there exists an open set $U$ and meromorphic functions $f_1, \dots, f_n$ such that
$df_1 \wedge \dots \wedge df_n (z) \ne 0$ for any $z\in U$. This contradiction proves the result.
\end{proof}

It is well-known that all Moishezon manifolds of dimension $n \le 2$ are projective, and therefore, these are {\calligra M }-manifolds. But for $n \ge 3$ there exist nonprojective (and non-K\"ahler) Moishezon manifolds~\cite{Moi,Shafa}. Given a nonprojective Moishezon manifold $X$, there exists a bimeromorphic equivalence to a projective manifold $Y$, which gives an isomorphism between $\mathcal{M}(X)$ and $\mathcal{M}(Y)$.
And although the separation property (b) holds for $\mathcal{M}(Y)$ when $Y$ is projective, the isomorphism does not immediately imply that the same holds for $\mathcal{M}(X)$. For example, in Hironaka's construction~\cite{Hi62} of a 
nonprojective 3-dimensional Moishezon manifold, $X$ is built from $\mathbb{CP}^3$ by blowing up two lines $L_1$ and $L_2$ intersecting at a point $P$ in reverse orders across different charts. This creates two rational curves $C_1$ and $C_2$ (the fibres over $P$) that form a homologically trivial cycle, $[C_1] + [C_2] = 0$. We claim that 
meromorphic functions on $X$ do not separate points. Indeed,
suppose there exists $g \in \mathcal{M}(X)$ that is nonconstant on $C_1$. Since $C_1 \cong \mathbb{CP}^1$, the restriction $g|_{C_1}$ must be a surjective holomorphic map onto $\mathbb{CP}^1$. Consequently, for every value $c \in \mathbb{CP}^1$, the effective divisor $D_c = \overline{g^{-1}(c)}$ intersects $C_1$, implying the intersection number $D_c \cdot C_1 > 0$. However, because intersection numbers depend only on homology classes, the relation $[C_1] + [C_2] = 0$ forces $D_c \cdot C_2 = -D_c \cdot C_1 < 0$. The only way an effective divisor can have a strictly negative intersection with a curve is if the curve is an irreducible component of the divisor. Thus, $C_2$ must be contained in $D_c$ for every $c \in \mathbb{CP}^1$, which is a contradiction. Therefore, every meromorphic function on $X$ must be constant on $C_1 \cup C_2$. 

In general it is not known whether every nonprojective Moishezon manifold admits a null-homologous complex curve as discussed above, and so it is an open problem to characterize Moishezon manifolds that are {\calligra M }-manifolds.

We call a complex manifold $X$ {\it meromorphically spreadable} if for any point $p\in X$ there exist meromorphic functions $f_1,\dots, f_N$ on $X$ such that $p$ is an isolated point in the variety $\overline{F^{-1}(F(p))}$, where $F=(f_1,\dots, f_N)$. In the context of Stein manifolds, property (b) or (c) in Def.~\ref{M-manifold} is equivalent to holomorphic spreadability. We do not know if this holds for all {\calligra M }-manifolds, but some partial results are provided in the next proposition.

\begin{prop}\label{p.m-m}
Let $X$ be a
complex
manifold. Then
\begin{enumerate}
	\item[(i)] Property (b) in Definition~\ref{M-manifold} $\Longleftrightarrow$ $\{p\}$ is a meromorphically convex compact for any $p\in X$.
	\item[(ii)]Property (b) $\Longrightarrow$ $X$ is meromorphically spreadable.
	\item[(iii)]Property (c) $\Longrightarrow$ $X$ meromorphically spreadable.
	\item[(iv)]Let $X_1, X_2 \subset X$ be two open {\calligra M }-manifolds in a complex manifold $X$. Then $X_1 \cap X_2$ is an {\calligra M}-manifold.
	\item[(v)] If $X_1$ and $X_2$ are  {\calligra M }-manifolds, then so is $X_1 \times X_2$.
\end{enumerate}
\end{prop}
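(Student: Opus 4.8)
The plan is to verify the three defining conditions of Definition~\ref{M-manifold} in each of the five items, reducing the convexity parts of (iv) and (v) to Proposition~\ref{MerCvxProp} and handling separation and coordinates by restriction and pullback; the only genuinely nontrivial point is the finiteness hidden in the spreadability condition of part (ii). For (i) I would argue both implications through the normalization $g=f-f(p)$. If property (b) holds and $q\ne p$, separation yields $f\in\mathscr M(X)\cap\mathscr O(\{p,q\})$ with $f(p)\ne f(q)$; then $g=f-f(p)$ lies in $\mathscr M(X)\cap\mathscr O(\{p,q\})$, vanishes at $p$, and satisfies $|g(q)|>0=\|g\|_{\{p\}}$, so $q\notin\widehat{\{p\}}_X$. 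Hence $\widehat{\{p\}}_X=\{p\}$, i.e.\ $\{p\}$ is meromorphically convex. Conversely, if $\widehat{\{p\}}_X=\{p\}$ for every $p$, then for $q\ne p$ the point $q$ lies outside the hull, so some $f\in\mathscr M(X)\cap\mathscr O(\{p,q\})$ has $|f(q)|>|f(p)|$, and in particular $f(p)\ne f(q)$, which is exactly property (b).

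For (iii), property (c) gives spreadability almost by definition: the coordinate map $F=(f_1,\dots,f_n)$ is biholomorphic onto its image on a neighborhood $U$ of $p$, so $F^{-1}(F(p))\cap U=\{p\}$, and a short continuity argument on a relatively compact $V\Subset U$ shows $\overline{F^{-1}(F(p))}\cap V=\{p\}$, making $p$ isolated. The heart of the proposition is (ii), where the obstacle is passing from the infinite separating family to a finite one. I would work in the local ring $\mathscr O_{X,p}$ of germs of holomorphic functions, which is Noetherian, and consider the ideal $\mathfrak a\subseteq\mathscr O_{X,p}$ generated by the germs $f-f(p)$ as $f$ ranges over $\mathscr M(X)\cap\mathscr O_p$. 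Property (b) guarantees that for every $q\ne p$ near $p$ there is such an $f$ with $f(p)\ne f(q)$, so the common germ of zeros $V(\mathfrak a)$ equals $\{p\}$. By the Noetherian property, $\mathfrak a$ is already generated by finitely many of these, say $f_1-f_1(p),\dots,f_N-f_N(p)$; since they generate the same ideal they cut out the same germ, whence the germ of $F^{-1}(F(p))$ at $p$ is $\{p\}$ for $F=(f_1,\dots,f_N)$. This is precisely isolatedness, so $X$ is meromorphically spreadable.

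Finally, (iv) and (v) assemble from the earlier results. For $X_1\cap X_2$, condition (a) is Proposition~\ref{MerCvxProp}(ii), applied on a connected component if necessary; conditions (b) and (c) follow because the restriction of a meromorphic function on $X_1$ to the open subset $X_1\cap X_2$ remains meromorphic, so a separating function, respectively a coordinate system, furnished by the {\calligra M }-manifold $X_1$ restricts to one on $X_1\cap X_2$. For $X_1\times X_2$, condition (a) is Proposition~\ref{MerCvxProp}(iii); for (b), given $(p_1,p_2)\ne(q_1,q_2)$ some coordinate differs, say $p_1\ne q_1$, and the pullback $f\circ\pi_1$ of a separating function along the projection $\pi_1$ is meromorphic on the product and separates the two points; for (c), the pullbacks $f_i\circ\pi_1$ and $g_j\circ\pi_2$ of coordinate systems on the factors form a coordinate system on a product neighborhood. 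The main difficulty is thus confined to the finiteness argument in (ii); all remaining parts are formal consequences of Proposition~\ref{MerCvxProp} together with the stability of meromorphy under restriction and under pullback along holomorphic submersions.
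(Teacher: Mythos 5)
Parts (i), (iii), (iv) and (v) of your argument follow essentially the same route as the paper: (i) via the normalization $f-f(p)$ in both directions, (iii) from the local biholomorphism (the paper simply says ``obvious''), and (iv)--(v) by combining Proposition~\ref{MerCvxProp} with restriction to open subsets and pullback along the projections. The genuine divergence is in (ii). The paper runs a geometric descent: starting from a nonconstant $f_1\in\mathscr{M}(X)\cap\mathscr{O}_p$ it builds a strictly decreasing chain of germs $V_1\supsetneq V_2\supsetneq\cdots$ at $p$, each cut out by one more meromorphic function, terminating because at each step either the dimension or the number of irreducible components drops. You instead work in the Noetherian local ring $\mathscr{O}_{X,p}$ and extract a finite generating set of the ideal $\mathfrak{a}$ generated by all germs $f-f(p)$. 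Both are legitimate ways to obtain the required finiteness, and the algebraic version is arguably tidier.

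However, the step ``property (b) implies $V(\mathfrak{a})=\{p\}$'' needs more care than you give it. The elements of $\mathfrak{a}$ are germs at $p$: if $g_1,\dots,g_N$ generate $\mathfrak{a}$ and $q\neq p$ lies in their common zero set in some fixed neighbourhood $U$ of $p$, a separating function $f$ with $f(p)\neq f(q)$ only satisfies $f-f(p)=\sum h_i g_i$ on some smaller neighbourhood $U_f$, which need not contain $q$; the intersection $\bigcap_f U_f$ over all separating functions may well shrink to $\{p\}$, so pointwise separation at $q$ does not directly expel $q$ from a representative of $V(\mathfrak{a})$. To close the gap, suppose $V(\mathfrak{a})$ were positive-dimensional, take an irreducible component $C$ with an irreducible representative, pick $q\in C\setminus\{p\}$, and let $f$ separate $p$ and $q$. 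Then $f-f(p)\in\mathfrak{a}$ vanishes on $C$ near $p$, and the identity theorem for the meromorphic function $f|_C$ on the irreducible variety $C$ forces $f\equiv f(p)$ on $C$, contradicting $f(q)\neq f(p)$. It is only fair to note that the paper's own descent quietly relies on the same identity-theorem step (to produce $f_2$ nonconstant on the \emph{germ} $V_1$, not merely on a representative), so once this point is supplied your proof is at least as rigorous as the published one.
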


\begin{proof} $\ $
\begin{enumerate}
    \item[(i)]
Given any $p\in X$, suppose that $q\in \widehat{\{p\}}_X$, $q \ne p$. Since $X$ is meromorphically separable, there exists a meromorphic function $f$ such that $f(p) \ne f(q)$. Let $\tilde f = f - f(p)$. Then $0 =|\tilde f(p)|< |\tilde f(q)|$, which contradicts
$q \in \widehat{\{p\}}_X$. Conversely, if $\widehat{\{p\}}=\{p\}$, and $q \ne p$, then there exists a meromorphic function $f$ on $X$ which is holomorphic
on $\{p,q\}$ and $f(p) \ne f(q)$.

\item[(ii)] Let $p\in X$ be arbitrary, and let $f_1$ be a nonconstant meromorphic function on $X$ which is holomorphic near $p$ (exists by meromorphic separability). Let $V_1$ be the germ at $p$ of the complex hypersurface $\{z \in X : f_1(z) =f_1(p)\}$. Then there exists a meromorphic function $f_2$ on $X$, also holomorphic near~$p$, such that $f_2|_{V_1}\ne\text{ const}$. Then the germ $V_2$
at $p$ of the variety $\{z\in X : (f_1,f_2)(z) = (f_1, f_2)(p)\}$ is smaller than $V_1$.  We may repeat this process, at each step adding a function $f_j \in \mathcal M(X) \cap \mathcal O_p$ so that the germ $V_j=\{z\in X : f_\nu(z)=f_\nu(p), \ \nu=1,\dots,j\}$
at $p$ is either of smaller dimension than $V_{j-1}$ or has
fewer irreducible components at $p$. This can be continued until some germ $V_N$ is precisely the point $p$. Then the map
$f=(f_1,\dots, f_N)$ gives meromorphic spreadability at $p$.

\item[(iii)] This is obvious.

\item[(iv)] The manifold $X_1 \cap X_2$ is meromorphically convex by Proposition~\ref{MerCvxProp}(ii). The other properties are straight forward.

\item[(v)] This follows from Proposition~\ref{MerCvxProp}(iii).\qedhere
\end{enumerate}
\end{proof}

Many interesting questions remain open concerning {\calligra M }-manifolds. For example, it would be interesting to establish a connection between meromorphic convexity and existence of plurisubharmonic functions or smooth exhaustion functions satisfying additional properties.
A fundamental property of Stein manifolds is that they can be properly embedded into $\mathbb C^N$ for some $N>0$, i.e., a Stein manifold is  biholomorphically equivalent to a closed submanifold of $\mathbb C^N$. In analogy with bimeromorphic equivalence of Moishezon manifolds to projective manifolds, in the context of open {\calligra M\,-\,}manifolds, perhaps, the corresponding property is bimeromorphic equivalence to Stein manifolds or closed submanifolds of $\mathbb C^N$.
For projective manifolds, rational convexity is defined using positive divisors, which are naturally related to positive line bundles. This suggests a connection between meromorphic convexity on an {\calligra M }-manifold $X$  and its Picard group $\text{Pic}(X)$.
Some of these questions will be addressed in our forthcoming work.

\section{Oka--Weil-type theorems on non-Stein manifolds}\label{s.ow}

The classical Oka--Weil theorem states that any holomorphic function on a neighborhood of a polynomially (resp. rationally) convex compact
$K \subset \mathbb C^n$ can be approximated uniformly on $K$ by entire (resp. rational) functions. In this section we give variations of this result for holomorphically and meromorphically convex manifolds.

We say that a compact set $K\subset X$ is \textit{convex with respect to principal hypersurfaces} if its hull
\[
	H(K):=\left\{z\in X\,:\,\forall f\in\mathscr{O}(X)\text{ with }f(z)=0\text{ we have }f^{-1}(0)\cap K\neq\varnothing\right\}
\]
coincides with $K$.

The following result is a meromorphic version of \cite[Theorem 3.2]{Mo20}.

\begin{thm}[Oka--Weil]\label{easyOW}
	Let $X$ be a holomorphically convex manifold and let $K$ be a compact set with $H(K)=K$. Let $U$ be a neighborhood of $K$ on which is defined a holomorphic function $f$. Then for all $\varepsilon>0$, there exist $u,v\in\mathscr{O}(X)$, coprime at each point of $X$, with the property that
\[
	\sup_{z\in K}\left|f-\frac{u}{v}\right|<\varepsilon.
\]
\end{thm}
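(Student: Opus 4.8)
The plan is to reduce this meromorphic (rational) approximation to the holomorphic Oka--Weil theorem \cite[Theorem 3.2]{Mo20} via the classical ``Oka map'' trick, which linearizes the reciprocals of the separating functions supplied by the hypothesis $H(K)=K$. Holomorphic convexity of $X$ would enter twice: once to force the analytic polyhedron constructed below to be relatively compact in $U$, and once through the product $X\times\mathbb C^m$, which is again holomorphically convex and is the manifold on which \cite[Theorem 3.2]{Mo20} will be applied.

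First I would use $H(K)=K$ to manufacture a rational polyhedron around $K$. Fix a relatively compact neighbourhood $V$ of $K$ with $\overline V\subset U$, and use an exhaustion of $X$ by holomorphically convex compacts to enclose $K$ in a compact $W$. For each $w\in W\setminus V$ the relation $w\notin K=H(K)$ yields $g_w\in\mathscr O(X)$ with $g_w(w)=0$ and $g_w\neq0$ on $K$; rescaling gives $|g_w|>1$ on $K$ while $g_w(w)=0$. By compactness of $W\setminus V$, finitely many $g_1,\dots,g_m$ suffice so that the component $\Pi$ of $\{z\in W:|g_j(z)|>1,\ j=1,\dots,m\}$ containing $K$ satisfies $K\subset\Pi$, $\overline\Pi\subset V\subset U$, and each $g_j$ is nonvanishing on $\overline\Pi$.

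Next I would introduce the Oka map $\Phi(z)=(z,1/g_1(z),\dots,1/g_m(z))$, a holomorphic embedding of a neighbourhood of $\overline\Pi$ onto a closed submanifold of an open subset of $X\times\mathbb C^m$; the image $\Phi(\overline\Pi)$ lies in $X\times\overline{\mathbb D}^m$ and is cut out there by $w_jg_j(z)=1$. The key point, which I would need to verify, is that $\Phi(\overline\Pi)$ is holomorphically convex in $X\times\mathbb C^m$, the inequalities $|g_j|>1$ having become the polydisc bounds $|w_j|<1$. The given datum extends trivially as $F(z,w):=f(z)$, holomorphic on $U\times\mathbb C^m\supset\Phi(\overline\Pi)$, so \cite[Theorem 3.2]{Mo20} supplies $G\in\mathscr O(X\times\mathbb C^m)$ with $\sup_{\Phi(\overline\Pi)}|G-F|<\varepsilon$. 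Expanding $G(z,w)=\sum_\alpha a_\alpha(z)w^\alpha$ with $a_\alpha\in\mathscr O(X)$ and truncating to $|\alpha|\le N$ (legitimate by uniform convergence on the compact polydisc), the restriction $G\circ\Phi$ becomes $\sum_{|\alpha|\le N}a_\alpha(z)\prod_j g_j(z)^{-\alpha_j}=u/v$ with $v=\prod_j g_j^{N}\in\mathscr O(X)$ and $u\in\mathscr O(X)$; since $v\neq0$ on $K\subset\overline\Pi$ this is holomorphic on $K$ and satisfies $\sup_K|f-u/v|<\varepsilon$. To meet the coprimality clause I would then pass to a reduced representation: the $\alpha_j=N$ terms of $u$ are not divisible by $g_j$ for a generic choice of $N$ and of $G$, so after cancelling common local factors $u$ and $v$ become coprime at each point of $X$ without changing the quotient.

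The main obstacle is the convexity bookkeeping straddling the second and third steps: turning the separating functions from $H(K)=K$ into a polyhedron that is simultaneously relatively compact in $U$ (where holomorphic convexity of $X$ is used to bound $\{|g_j|\ge1\ \forall j\}$ away from $K$ to its single component $\overline\Pi$) and whose Oka image is holomorphically convex in $X\times\mathbb C^m$, so that \cite[Theorem 3.2]{Mo20} genuinely applies on the nose. A secondary subtlety is the coprimality normalization, which in general is tied to the triviality of the pole line bundle; I expect it to be handled here by the explicit product form of $v$ together with a genericity argument rather than by a global divisor-theoretic statement.
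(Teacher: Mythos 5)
Your route is genuinely different from the paper's: the paper first passes to the Remmert reduction $\varphi\colon X\to Y$ (a Stein space), checks that $\varphi(K)$ is again convex with respect to principal hypersurfaces, and then invokes the Stein-space version of \cite[Theorem 2.1]{BoSh25}, which already delivers a globally coprime pair $u,v$; the conclusion is pulled back along $\varphi$. Your Oka-map argument works directly on $X$ and is viable for the approximation clause, with one repair: \cite[Theorem 3.2]{Mo20} must be applied not to $\Phi(\overline\Pi)$ but to its $\mathscr{O}(X\times\mathbb{C}^m)$-convex hull $L$, which need not equal $\Phi(\overline\Pi)$. If you take $W$ to be $\mathscr{O}(X)$-convex, then $L$ is contained in the graph of $(1/g_1,\dots,1/g_m)$ over $W\cap\{|g_j|\ge 1\ \forall j\}\subset V$, so $F(z,w)=f(z)$ is still holomorphic near $L$ and the approximation on $K$ survives. (Also take the union of the finitely many components of the polyhedron meeting $K$ rather than ``the'' component; $K$ need not be connected.)

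The genuine gap is the coprimality clause. Your $v=\prod_j g_j^N$ has zero divisor $N\sum_j Z(g_j)$, and modulo $g_j$ the numerator reduces to the single term $a_{Ne_j}\prod_{k\neq j}g_k^N$. For $u$ and $v$ to be coprime at \emph{every} point of $X$ you would need (a) the $g_j$ to have pairwise no common irreducible components anywhere on $X$ --- otherwise $u$ and $v$ share the corresponding local factor along $Z(g_j)\cap Z(g_k)$ --- and (b) $a_{Ne_j}$ to be nonvanishing on every local irreducible component of $Z(g_j)$, a global condition on a Taylor coefficient of $G$ that you control only near the compact hull. Neither is delivered by ``genericity'' as stated: the $g_j$ are whatever the hypothesis $H(K)=K$ hands you, and $N$ is dictated by the accuracy of the truncation, not free. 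The fallback of ``cancelling common local factors'' is not available either, since extracting the gcd divisor as a global holomorphic function is a second Cousin problem, obstructed in general on a holomorphically convex manifold (the paper needs $H^2(X,\mathbb{Z})=0$ for exactly this kind of cancellation in Theorem~\ref{StrongPoincare}). This is precisely why the paper routes the proof through the Stein reduction, where the earlier theorem produces a coprime pair by construction; to salvage your approach you would need either a comparable divisor-theoretic input or a much more careful choice of the $g_j$ and of the perturbation of $G$.
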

The proof of Theorem~\ref{easyOW} relies on a so-called Remmert reduction of $X$. This can be constructed as follows: we say that $x$ and $y$ are equivalent if $f(x)=f(y)$ for all $\mathscr{O}(X)$ and we call this relation ``$\sim$'', then, by a theorem of Cartan, $Y:=X\setminus\sim$ is a complex analytic space. If $\varphi:X\to Y$ is the quotient map, then we have $\varphi_*\mathscr{O}(X)=\mathscr{O}(Y)$. It is clear that $Y$ remains holomorphically convex, and, contains no compact complex varieties (since all compact complex varieties in $X$ are identified as points). Therefore $Y$ is a Stein space whose holomorphic functions are identified with those of $X$ in a very natural way, see~\cite[Thm 57.11]{KaKa}.

\begin{proof}[Proof of Theorem~\ref{easyOW}]
	Let $K=H(K)$ be a compact set and let $U$ be a neighborhood of $K$ on which is defined a holomorphic function $f$. First, it must be noted that, if $V$ is a compact irreducible complex variety in $X$, then either $V\subset K$ or $V\subset X\setminus K$. This is because, if $V\not\subseteq K$, then there exists a point $z\in V\setminus K$. The fact that $H(K)=K$ implies that there exists a member $h$ of $\mathscr{O}(X)$ whose zero set passes through $z$ and avoids $K$---but $h(z)=0$ implies that $h|_V\equiv 0$ by the maximum principle. This implies that $V\subset X\setminus K$.

	Let ``$\sim$'' be the aforementioned equivalence relation, and let $Y=X\setminus\!\!\sim$ be the associated Remmert reduction with projection $\varphi: X\to Y$.

	The compact set $\varphi(K)\subset Y$ is convex with respect to principal hypersurfaces. Indeed, suppose that $p\in Y\setminus \varphi(K)$ belongs to $H(\varphi(K))$. Then every $g\in\mathscr{O}(Y)$ with $g(p)=0$ has zero set intersecting $\varphi(K)$. Choose $r\in X$ with $\varphi(r)=p$; we necessarily have $r\not\in K$. It follows that $g\circ\varphi\in\mathscr{O}(X)$ is zero at $r$ and has zero set passing through $K$. Since $\varphi_{*}\mathscr{O}(X)=\mathscr{O}(Y)$, it follows that $r\in H(K)\setminus K$, contrary to $H(K)=K$.

	Applying the Oka--Weil theorem from the previous work of the authors~\cite[Theorem 2.1]{BoSh25} (see the remark following the proof) shows that there exists $u,v\in\mathscr{O}(Y)$, which are pairwise coprime at every point of $Y$, so that \[ \sup_{z\in\varphi(K)}\left|\varphi_*f(z)-\frac{u(z)}{v(z)}\right|<\varepsilon
\]
	this shows that $\frac{u\circ\varphi}{v\circ\varphi}$ is the desired meromorphic function.
\end{proof}

\noindent \textbf{Remark.} Theorem 2.1 from~\cite{BoSh25} is stated for Stein manifolds, however the result is also true for Stein spaces generally. Indeed for the proof to go through, one requires the following results in the context of Stein spaces:

\begin{result}
	If $X$ is a Stein space and $h\in\mathscr{O}(X)$, then $X\setminus h^{-1}(0)$ is a Stein space.
\end{result}
\begin{proof}
This follows from~\cite[Proposition 51.8]{KaKa}.
\end{proof}

\begin{result}[Oka--Weil]
If $X$ is a Stein space and $K$ is a compact holomorphically convex subset of $X$, then every holomorphic function in an open neighborhood of $K$ can be approximated uniformly on $K$ by functions in $\mathscr{O}(X)$.
\end{result}
\begin{proof}
This follows from~\cite[Theorem 2.3.1]{Fo2017}.
\end{proof}
\begin{result}
Given a Stein space $X$, there exists a holomorphic map from $X$ into complex Euclidean space which is injective and proper.
\end{result}
\begin{proof}
This follows from~\cite[p.~127]{GrRe04}.
\end{proof}
For {\calligra M }-manifolds we have a result with some additional assumptions on the compact $K$.

\begin{thm}\label{OW}
Let $X$ be an {\calligra M}-manifold and let $K\subset X$ be a compact set with $\widehat{K}_X=K$. We additionally assume a strengthened versions of conditions (b) and (c) in the definition of an {\calligra M}-manifold; that is, we assume:
\begin{enumerate}
	\item[(b)$'$] for each $p\in K$, there exist $f_1,\ldots,f_n\in\mathscr{M}(X)\cap\mathscr{O}(K)$ that form local coordinates of $X$ near $p$, and

	\item[(c)$'$]for each $p,q\in K$, there exist a $f\in\mathscr{M}(X)\cap\mathscr{O}(K)$ with $f(p)\neq f(q)$.
\end{enumerate}
	Then for any $\varphi\in\mathscr{O}(K)$ and $\varepsilon>0$ there exists $g\in\mathscr{M}(X)$ such that $\|\varphi-g\|_K<\varepsilon$.
\end{thm}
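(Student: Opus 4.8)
The plan is to mimic the classical proof of the Oka--Weil theorem, where the key device is to use the separating and coordinate functions to embed a neighborhood of $K$ into a model space and then pull back a polynomial (here, rational) approximation. First I would use the strengthened coordinate condition (ii)$'$ together with the separation condition (iii)$'$ to build a global meromorphic map $F = (f_1, \dots, f_N) : X \to \mathbb{C}^N$, holomorphic on a neighborhood of $K$, that is injective on $K$ and immersive at each point of $K$. Concretely, (ii)$'$ supplies, for each $p \in K$, functions giving local coordinates near $p$; by compactness of $K$ one extracts finitely many such tuples, and (iii)$'$ lets one adjoin finitely many additional separating functions so that $F$ is injective on all of $K$ (a standard compactness argument: for each pair of distinct points one picks a separating function, and finitely many suffice on the compact $K \times K$ off the diagonal). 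Thus $F$ restricts to a biholomorphism from a neighborhood of $K$ onto a locally closed submanifold, and $F(K)$ is a compact subset of $\mathbb{C}^N$.

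The second step is to transfer the hull condition. The hypothesis $\widehat{K}_X = K$ should force $F(K)$ to be rationally convex in $\mathbb{C}^N$, or at least to have the property that $\varphi \circ F^{-1}$ extends holomorphically to a neighborhood of a rationally convex set containing $F(K)$. The idea is that a rational function $R$ on $\mathbb{C}^N$ with pole divisor missing $F(K)$ pulls back to $R \circ F \in \mathscr{M}(X) \cap \mathscr{O}(K)$; if some point $w_0 \notin F(K)$ were in the rationally convex hull of $F(K)$ but had a preimage structure forcing it outside, one derives that any preimage lies in $\widehat{K}_X \setminus K$, contradicting $\widehat{K}_X = K$. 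Making this precise is where I expect the main obstacle to lie, because $F$ need not be proper and $F(K)$ sits inside a submanifold rather than all of $\mathbb{C}^N$, so the relation between the intrinsic hull $\widehat{K}_X$ and the ambient rational hull of $F(K)$ requires care; one must ensure that ambient rational functions, restricted to the image, generate enough of $\mathscr{M}(X)\cap\mathscr{O}(K)$ to detect the hull.

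The third step is approximation. Given $\varphi \in \mathscr{O}(K)$, push it forward to $\psi := \varphi \circ F^{-1} \in \mathscr{O}(F(K))$, defined and holomorphic on a neighborhood of the compact, rationally convex set $F(K)$ (after shrinking). By the classical Oka--Weil theorem on $\mathbb{C}^N$ applied to the rationally convex compact $F(K)$, there is a rational function $R$ on $\mathbb{C}^N$, with poles off $F(K)$, such that $\sup_{F(K)} |\psi - R| < \varepsilon$. Pulling back, $g := R \circ F \in \mathscr{M}(X)$ is holomorphic on a neighborhood of $K$ (its pole divisor, being $F^{-1}$ of the pole divisor of $R$, misses $K$), and
\[
  \|\varphi - g\|_K = \sup_{z \in K} |\varphi(z) - R(F(z))| = \sup_{w \in F(K)} |\psi(w) - R(w)| < \varepsilon,
\]
which is the desired estimate.

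The step I anticipate as genuinely delicate is the second one: establishing that $\widehat{K}_X = K$ translates into rational convexity of $F(K)$ in the ambient $\mathbb{C}^N$. The subtlety is that meromorphic functions on $X$ need not all arise as pullbacks of ambient rational functions under $F$, so a priori $\mathscr{M}(X) \cap \mathscr{O}(K)$ could be strictly larger than the algebra $F^*(\text{rational functions})$; I would need to argue that the extra functions cannot enlarge the ambient hull beyond $F(K)$, perhaps by composing coordinates to realize any ambient coordinate function as an element of $\mathscr{M}(X)\cap\mathscr{O}(K)$ and invoking that the hull is determined by these. If $F$ can be arranged to be proper (or if one works on a relatively compact Stein-like neighborhood), this gap closes cleanly; otherwise a localization argument near $K$ is required.
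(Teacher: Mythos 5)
Your first and third steps track the paper's argument (building an embedding near $K$ from (ii)$'$ and (iii)$'$, then approximating a pushed-forward function), but your second step contains a genuine gap that you yourself flag and do not close: nothing in the hypothesis $\widehat{K}_X=K$ gives rational convexity of $F(K)$ in $\mathbb{C}^N$. The obstruction is exactly the one you worry about. A point $w_0$ in the rational hull of $F(K)$ need not lie in the image of $F$ at all (your $F$ is neither proper nor surjective), so there is no preimage in $X$ on which to contradict $\widehat{K}_X=K$; and conversely $\widehat{K}_X$ is cut out by all of $\mathscr{M}(X)$, not just by $F^*$ of ambient rational functions, so the two hulls are not comparable in the direction you need. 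As written, the approximation step has no rationally convex compact to apply the classical Oka--Weil theorem to.

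The paper closes this gap by using the hull hypothesis for a different purpose: to build a \emph{meromorphic polyhedron}. For each $p$ in the boundary of a neighborhood $U\supset K$ where $\varphi$ is holomorphic, $p\notin\widehat{K}_X=K$ yields $m\in\mathscr{M}(X)\cap\mathscr{O}(K\cup\{p\})$ with, after rescaling, $\|m\|_K<1<|m(p)|$; compactness of $\partial U$ gives finitely many $m_1,\dots,m_N$ with $K\subset\Pi:=\bigcap_j\{|m_j|<1\}\subset\subset U$. The point is that $\Phi=(m_1,\dots,m_N)$ restricted to $\Pi$ is a \emph{proper} holomorphic map into the polydisk $\mathbb{D}^N$. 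One then appends the coordinate and separating functions from (ii)$'$ and (iii)$'$ (rescaled to have sup-norm $<1$ on $K$) to make $\Phi$ an embedding near $K$, and appends further polyhedron functions to shrink $\Pi$ into that embedding neighborhood, so that $\Phi(\Pi)$ becomes a closed complex subvariety of $\mathbb{D}^N$. Now no rational convexity is needed: the pushed-forward function extends from the closed subvariety $\Phi(\Pi)$ to all of $\mathbb{D}^N$ by the Oka--Cartan extension theorem, and its Taylor polynomials, precomposed with $\Phi$, give the desired meromorphic approximants on $K$. You should replace your second step with this polyhedron construction; the rest of your outline then goes through essentially as in the paper.
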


Note that in the context of approximation of holomorphic functions both (b)$'$ and (c)$'$ are natural assumptions on $K$ as can be seen by approximating coordinate functions on a small closed ball $K$ in a coordinate chart on $X$. However, these are not necessary conditions for approximation, e.g., $K=\mathbb{CP}^1\times\overline{\mathbb{D}}\subset X$, where $X=\mathbb{CP}^1\times\mathbb{C}$ and $\mathbb{D}$ denotes the unit disk in $\mathbb{C}$.

\begin{proof}[Proof of Theorem~\ref{OW}]
	Let $U$ be an open neighborhood of $K$ on which $\varphi$ is defined and holomorphic.

	Fix a point $p$ on the topological boundary of $U$. Then, because $p\not\in K=\widehat{K}_X$, there exists a meromorphic function $m\in\mathscr{M}(X)$ with $m\in\mathscr{O}(K\cup\{z\})$ so that $|m(p)|>\|m\|_K$. Choose $\alpha>0$ so that $|m(p)|>\alpha >\|m\|_K$. By replacing $m$ with $m/\alpha$, we can assume that $\|m\|_K<1$ and $|m(p)|>1$, and hence that $|m|>1$ in a neighborhood of $p$. Compactness then ensures the existence of $m_1,\ldots,m_N\in\mathscr{M}(X)$ so that
\[
	K\subset\bigcap_{j=1}^{N}\{z\in X:|m_j(z)|<1\}\subset\subset U.
\]

	Now, define a meromorphic map $\Psi:X\to\mathbb{CP}^1 \times \dots \times \mathbb{CP}^1$ by
\[
	\Psi(z)=\big(m_1(z),\ldots,m_N(z)\big).
\]
	Restricted to $\Pi=\cap_{j=1}^{N}\{z:|m_j(z)|<1\}$, $\Psi$ is a proper holomorphic map into $\mathbb{D}^N$, the unit polydisk of $\mathbb{C}^N$.

	We claim that, by adding more meromorphic functions to $\Psi$ if necessary, we can assume further that $\Psi$ is an embedding on $\Pi$. First, we will show that $\Psi$ can be modified to an immersion on $K$. Accordingly, suppose that there exists a point $z_0$ so that the total derivative $D\Psi$, viewed as a matrix in local coordinates near $z_0$, does not have full rank at $z_0$. We invoke assumption (b)$'$, yielding $f_1,\ldots, f_n\in\mathscr{M}(X)\cap\mathscr{O}(K)$ which form local coordinates near $z_0$. By rescaling, we can assume that $\|f_j\|_K<1$ for all $j$. Then the assignment
\[
	z\mapsto \big(m_1(z),\ldots,m_N(z),f_1(z),\ldots,f_n(z)\big)
\]
	is a meromorphic map on $X$ which is a proper holomorphic map into $\mathbb{D}^{N+n}\subset\mathbb{C}^{N+n}$ when restricted to the set
\[
	\left(\bigcap_{j=1}^{N}\{z\in X:|m_j(z)|<1\}\right)\cap\left(\bigcap_{j=1}^{n}\{z\in X:|f_n(z)|<1\}\right)\subset\subset U,
\]
	and has the additional property that its differential has full rank near $z_0$. This process will terminate after finitely many iterations, in view of compactness of $K$. This means that, after adding finitely many functions, $\Psi$ can be made into an immersion on $K$. Similarly, if $z',z''\in K$ are points such that $\Psi(z')=\Psi(z'')$, we invoke (c)$'$ to find $g\in\mathscr{M}(X)\cap\mathscr{O}(K)$ with $g(z')\neq g(z'')$. We likewise rescale and add $g$ to the map $\Psi$ in order to obtain a meromorphic map with all the same properties as before but additionally attains distinct values near $z'$ and $z''$. Compactness of the set $\{(z,w)\in K\times K:\Psi(z)=\Psi(w)\}$ also ensures this process will terminate after finitely many steps. This shows that $\Psi$ can be made into an embedding on $K$, and hence in a neighborhood $V$ of $K$. By adding yet more meromorphic functions if necessary (repeating the same argument at the beginning of the proof, this time to the topological boundary of $V$), we can assume that $\Pi\subset\subset V$, as well. This proves the claim.

	The map $\Psi$ thus embeds $\Pi$ onto a complex complex subvariety of the unit polydisc $\mathbb{D}^N\subset\mathbb{C}^N$, and hence there exists a $h\in\mathscr{O}(\Psi(\Pi))$ such that $h\circ\Psi=\varphi$. In view of the Oka--Cartan extension theorem~(see, e.g.,~\cite[Corollary 2.6.3]{Fo2017};~\cite{Se53}), we extend $h$ to a function $H\in\mathscr{O}(\mathbb{D}^N)$. Expanding $H$ into a power series and precomposing its Taylor polynomials by $\Psi$ gives a sequence of meromorphic functions converging to $\varphi$ uniformly on $K$.
\end{proof}


\section{A Sufficient Condition for a Long $\mathbb{C}^2$ to Be an {\calligra M }-Manifold}\label{s.longc2}
\begin{definition}
We say that an $n$-dimensional complex manifold $X$ is a \textit{long }$\mathbb{C}^n$ if there is a countable sequence $\{X_j\}_{j}$ of open subsets of $X$ with the following properties:
\begin{enumerate}
	\item[(i)] $X_j\subseteq X_{j+1}$ for all $j$;
	\item[(ii)] each $X_j$ is biholomorphic to $\mathbb{C}^n$; and
	\item[(iii)] $\bigcup_{j}X_j=X$.
\end{enumerate}
\end{definition}
It is not true in general that every long $\mathbb{C}^n$ is biholomorphic to $\mathbb{C}^n$. In fact, Boc Thaler and
Forstneri\v{c}~\cite{BTFo16} have constructed a long $\mathbb{C}^2$ which admits no nonconstant holomorphic functions (see also Wold~\cite{Wo10}).

\begin{definition}
Let $X_1 \subset X_2$ be Stein manifolds of the same dimension. We say that $(X_1, X_2)$ is a {\it meromorphically
Runge pair}, if  for any compact set $K\subset X_1$ and any function $f\in \mathscr M(X_1)$ that is holomorphic in $K$,
and any $\varepsilon>0$, there exists a meromorphic function $g \in \mathscr M(X_2)$ such that $||g - f||_K < \varepsilon$.
\end{definition}

For a related notion for domains in Euclidean spaces, see~\cite{Di07}.

\begin{prop}\label{rungePair}
Let $X_1$ and $X_2$ be Stein manifolds, $X_1 \subset X_2$. The following are equivalent.

\begin{enumerate}
\item[(i)] $(X_1, X_2)$ is a meromorphically Runge pair.

\item[(ii)] $\widehat K_{X_1} = \widehat K_{X_2}$ for any compact $K\subset X_1$.

\item[(iii)] If $K\subset X_1$ is a compact set satisfying $K=\widehat K_{X_1}$, then any holomorphic function in an neighbourhood of $K$
can be approximated uniformly on $K$ by meromorphic functions on $X_2$.
\end{enumerate}
If, in addition, $H^2(X_1,\mathbb{Z})=0$, then the above are equivalent to
\begin{enumerate}
\item[(iv)] $\widehat{K}_{X_2}\subset X_1$ for all compact sets $K\subset X_1$.
\end{enumerate}
\end{prop}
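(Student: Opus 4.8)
The plan is to prove the equivalences by establishing a cycle $(i)\Rightarrow(ii)\Rightarrow(iii)\Rightarrow(i)$ among the first three conditions, and then handle $(iv)$ separately under the additional hypothesis $H^2(X_1,\mathbb{Z})=0$. For $(iii)\Rightarrow(i)$, I note that $(iii)$ is essentially a special case of $(i)$ restricted to hulls, so the content is upgrading approximation over $\widehat K_{X_1}$-convex compacts to arbitrary compacts: given $f\in\mathscr M(X_1)$ holomorphic on a compact $K$, replace $K$ by its hull $\widehat K_{X_1}$, which is compact since $X_1$ is Stein (hence meromorphically convex), and on which $f$ remains holomorphic by Proposition~\ref{p.ext}(i) together with Proposition~\ref{steinProject}(i); then apply $(iii)$. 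For $(i)\Rightarrow(ii)$, I would argue that the meromorphic Runge property forces the two hulls to coincide: the inclusion $\widehat K_{X_2}\cap X_1\subseteq\widehat K_{X_1}$ fails only if some $f\in\mathscr M(X_1)\cap\mathscr O(K\cup\{z\})$ with $|f(z)|>\|f\|_K$ exists, and approximating such an $f$ by $g\in\mathscr M(X_2)$ (using $(i)$) produces a function witnessing $z\notin\widehat K_{X_2}$; the reverse inclusion $\widehat K_{X_1}\subseteq\widehat K_{X_2}$ is automatic since every $g\in\mathscr M(X_2)$ restricts to $\mathscr M(X_1)$. One must take care that the approximating $g$ has its pole divisor away from $K\cup\{z\}$, which the Runge definition grants on compacts.

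For $(ii)\Rightarrow(iii)$, I would use the characterization that on Stein manifolds $\widehat K=h(K)$ (Proposition~\ref{steinProject}(i)). If $K=\widehat K_{X_1}$, then by $(ii)$ we also have $K=\widehat K_{X_2}$, so $K$ is its own hull in $X_2$. Thus a holomorphic function defined near $K$ can be approximated by meromorphic functions on $X_2$ directly via the meromorphic Oka--Weil theorem on the Stein manifold $X_2$ (this is the approximation result underlying Theorem~\ref{easyOW} and the cited \cite[Theorem 2.1]{BoSh25}), since $K$ is meromorphically convex in $X_2$. This is where I expect the main technical care: one needs the approximation statement over an $X_2$-meromorphically-convex compact, and to invoke the version of the Oka--Weil theorem for meromorphic functions on Stein manifolds rather than the polydisc-embedding argument of Theorem~\ref{OW}.

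For the final equivalence under $H^2(X_1,\mathbb{Z})=0$, the plan is to show $(ii)\Leftrightarrow(iv)$. The implication $(ii)\Rightarrow(iv)$ is immediate, since $\widehat K_{X_2}=\widehat K_{X_1}\subseteq X_1$. The converse $(iv)\Rightarrow(ii)$ is where the topological hypothesis enters: I would argue that $\widehat K_{X_2}\subseteq X_1$ lets us regard $\widehat K_{X_2}$ as a compact subset of $X_1$, and the content is that it is already $X_1$-meromorphically convex. The condition $H^2(X_1,\mathbb{Z})=0$ guarantees (as noted in the introduction, via \cite{Ep78,BoSh25}) that meromorphic functions on the Stein manifold $X_1$ admit global representations $u/v$ with $\gcd(u,v)=1$, so that restriction $\mathscr M(X_2)\to\mathscr M(X_1)$ is compatible with testing against globally coprime quotients and no new poles are introduced along the relevant divisors. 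Concretely, I would show that any $z\in\widehat K_{X_2}$ cannot be separated from $K$ by an $f\in\mathscr M(X_1)\cap\mathscr O(K\cup\{z\})$: such an $f$, written globally as $u/v$, extends or relates to a meromorphic function on $X_2$ whose pole behavior is controlled by the topological vanishing, yielding $z\in\widehat K_{X_2}$ only if $z\in\widehat K_{X_1}$.

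The hard part will be the direction $(iv)\Rightarrow(ii)$, specifically converting containment of the larger hull inside $X_1$ into the equality of hulls, where one must genuinely use $H^2(X_1,\mathbb{Z})=0$ to pass between local quotient representations of meromorphic functions on $X_1$ and global data on $X_2$; without the topological hypothesis the obstruction to a global coprime representation (a class in $H^2(X_1,\mathbb{Z})$) is exactly what could allow the two hulls to differ while still keeping $\widehat K_{X_2}$ inside $X_1$.
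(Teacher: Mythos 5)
Your cycle $(i)\Rightarrow(ii)\Rightarrow(iii)\Rightarrow(i)$ matches the paper's, and the steps $(ii)\Rightarrow(iii)$ and $(iii)\Rightarrow(i)$ are essentially the paper's arguments. But there is a genuine gap in your $(i)\Rightarrow(ii)$: your approximation argument only yields $\widehat K_{X_2}\cap X_1\subseteq\widehat K_{X_1}$ (as you yourself write), and this does not give $(ii)$, because a priori $\widehat K_{X_2}$ may contain points of $X_2\setminus X_1$, where there is no function of $\mathscr M(X_1)$ to approximate. You never rule this out. The paper closes exactly this hole: since $\widehat K_{X_1}=\widehat K_{X_2}\cap X_1$ is compact and $X_1$ is open in $X_2$, the set $\widehat K_{X_1}$ is relatively open and closed in $\widehat K_{X_2}$, hence a union of connected components of it; by the cited fact that on a Stein manifold a connected component of a meromorphically convex compact is itself meromorphically convex, the piece $\widehat K_{X_1}$ is a meromorphically convex compact containing $K$, so it must contain all of $\widehat K_{X_2}$. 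Without some such argument your proof of $(ii)$ is incomplete.

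Your treatment of $(iv)$ also falls short of a proof. The implication $(ii)\Rightarrow(iv)$ is indeed trivial, but for the converse your key step --- ``such an $f$, written globally as $u/v$, extends or relates to a meromorphic function on $X_2$ whose pole behavior is controlled by the topological vanishing'' --- is not an argument: functions on $X_1$ do not extend to $X_2$, and no mechanism for producing the required element of $\mathscr M(X_2)$ is given. The paper instead proves $(iv)\Rightarrow(i)$ directly: given $f\in\mathscr M(X_1)$ holomorphic on $K$, use $H^2(X_1,\mathbb{Z})=0$ to write $f=u/v$ with $u,v\in\mathscr{O}(X_1)$ globally coprime and $v\neq 0$ on $K$; since $(iv)$ puts the compact $\widehat K_{X_2}$ inside $X_1$ (where $u$ and $v$ are defined and holomorphic) and this hull is meromorphically convex in $X_2$, the Oka--Weil theorem approximates $u$ and $v$ separately by elements of $\mathscr M(X_2)$ uniformly on $\widehat K_{X_2}$, and the quotient of sufficiently close approximants approximates $f$ on $K$. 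That separate approximation of numerator and denominator over the hull $\widehat K_{X_2}$ is the point where both $(iv)$ and the cohomological hypothesis are actually used, and it is missing from your sketch.
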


\begin{proof}
(i) $ \Longrightarrow$ (ii). Let $K$ be a compact set in $X_1$. Then $\widehat K_{X_1} \subset \widehat K_{X_2}$ as follows from the
inclusion $\mathscr M(X_2) \subset \mathscr M(X_1)$ (as algebras on $X_1$). We need to prove the other inclusion,
which is equivalent to showing that $(\widehat K_{X_1})^c \subset (\widehat K_{X_2})^c$. Suppose that
$p\in X_1 \setminus\widehat K_{X_1}$. Then there exists $f\in \mathscr M(X_1)\cap \mathscr O(K \cup\{p\})$
such that $|f(p)|> ||f||_K$.
	Since $(X_1, X_2)$ is a meromorphically Runge pair, there exists a  function $g \in\mathscr{M}(X_2)$ that approximates $f$ on
$K\cup\{p\}$ well enough so that we have $||g||_K<|g(p)|<\infty$, in particular, $g$ is holomorphic on $K\cup\{p\}$.
And this shows that $p \notin \widehat K_{X_2}$. Since $X_1$ is Stein, $\widehat K_{X_1}$ is compact, and we
	conclude that $\widehat K_{X_1}$ is a connected component of $\widehat K_{X_2}$. It is well-known (c.f. \cite[Corollary 1.5.5]{St08} using \cite[Theorem 2.1]{BoSh25} or~\cite[Theorem 2]{Hi71})
that on a Stein manifold, a connected component of a meromorphically convex compact is itself meromorphically convex,
and this shows that $\widehat K_{X_1} = \widehat K_{X_2}$.

(ii) $ \Longrightarrow$ (iii). This follows from the Oka--Weil theorem.

	(iii) $ \Longrightarrow$ (i). Let $K\subset X_1$ be an arbitrary compact. Since $X_1$ is Stein, $\widehat K_{X_1}$ is compact in $X_1$. If $f$ is a meromorphic function on $X_1$ that is holomorphic on $K$, then by Proposition~\ref{BasicProps}(iii) and Proposition~\ref{steinProject}(i), $f$ is holomorphic on $\widehat K_{X_1}$. By assumption, any holomorphic function on $\widehat K_{X_1}$ can be approximated by meromorphic function on $X_2$, in particular $f$ can be approximated on $K$.

	Now assume $H^2(X_1,\mathbb{Z})=0$. Since (ii) trivially implies (iv), in order to complete the proof it suffices to show that (iv) implies (i). Fix a compact set $K\subset X_1$ and let $f\in\mathscr{M}(X_1)$ be holomorphic on $K$. Since $H^2(X_1,\mathbb{Z})=0$ we can solve a multiplicative Cousin problem~\cite[Proposition V.1.8]{FrGr02} to write $f=u/v$, where $u,v\in\mathscr{O}(X_1)$ with $\gcd(u,v)=1$ and $v\neq 0$ on $K$. Since $\widehat{K}_{X_2}\subset X_1$, the Oka--Weil theorem allows us to approximate both $u$ and $v$ by members of $\mathscr{M}(X_2)$ in the uniform norm on $\widehat{K}_{X_2}$. Given $\varepsilon>0$, we take the quotient of functions sufficiently close to $u$ and $v$, yielding a meromorphic function $g\in\mathscr{M}(X_2)$ with $\|f-g\|_{K}<\varepsilon$.
\end{proof}

The main result of this section is analogous to Theorem 1.2 in~\cite{Wo10}.
\begin{thm}\label{MeroRunge}
    If $X=\bigcup_{j=0}^{\infty}X_j$ is a long $\mathbb{C}^2$ and $(X_j,X_{j+1})$ is a meromorphically Runge pair for each $j$, then $X$ is an {\calligra M }-manifold.
\end{thm}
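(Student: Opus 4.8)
The plan is to reduce all three defining conditions of an {\calligra M}-manifold to a single approximation statement, which I will call the \emph{Main Lemma}: for every index $m$, every compact $L\subset X_m$, every $g\in\mathscr M(X_m)\cap\mathscr O(L)$, and every $\varepsilon>0$, there exists a global meromorphic function $f\in\mathscr M(X)$ that is holomorphic on $L$ and satisfies $\|f-g\|_L<\varepsilon$. Granting this, condition (a) follows from a hull computation. Given a compact $K\subset X$, the increasing exhaustion $X=\bigcup_j X_j$ yields an index $j$ with $K\subset X_j$; by Proposition~\ref{rungePair}(i)$\Leftrightarrow$(ii) and induction the hulls stabilize, $\widehat K_{X_j}=\widehat K_{X_l}=:\widehat K$ for all $l\ge j$, and $\widehat K$ is a compact subset of $X_j$. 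The inclusion $\mathscr M(X)|_{X_j}\subseteq\mathscr M(X_j)$ gives $\widehat K\subseteq\widehat K_X$ exactly as in the proof of Proposition~\ref{rungePair}. For the reverse inclusion, suppose $z\in\widehat K_X\setminus\widehat K$; pick $m\ge j$ with $z\in X_m$, so that $z\notin\widehat K_{X_m}$ and there is $g\in\mathscr M(X_m)\cap\mathscr O(K\cup\{z\})$ with $|g(z)|>\|g\|_K$. Applying the Main Lemma on $L=K\cup\{z\}$ with $\varepsilon$ less than a third of $|g(z)|-\|g\|_K$ produces $f\in\mathscr M(X)\cap\mathscr O(K\cup\{z\})$ with $|f(z)|>\|f\|_K$, contradicting $z\in\widehat K_X$. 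Hence $\widehat K_X=\widehat K$ is compact.

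Conditions (b) and (c) also follow from the Main Lemma, using that each $X_m$ is biholomorphic to $\mathbb C^2$. For (b), given $p\ne q$ choose $m$ with $p,q\in X_m$ and a polynomial $g$ (an entire, hence meromorphic, function on $X_m$) with $g(p)\ne g(q)$; approximating $g$ on $L=\{p,q\}$ to within $\varepsilon<\tfrac12|g(p)-g(q)|$ yields $f\in\mathscr M(X)$, holomorphic near $p$ and $q$, with $f(p)\ne f(q)$. For (c), fix $p\in X_m$ and let $w_1,w_2$ be the coordinate functions of $X_m\cong\mathbb C^2$; approximating $w_1,w_2$ uniformly on a closed ball $\overline B\subset X_m$ centered at $p$ by $f_1,f_2\in\mathscr M(X)$ and invoking the Cauchy estimates on a slightly smaller ball shows $df_1\wedge df_2(p)\ne0$, so $f_1,f_2$ form local meromorphic coordinates at $p$.

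It remains to prove the Main Lemma, and here the mechanism is a telescoping construction fed by the single-step meromorphic Runge property. Since each pair $(X_l,X_{l+1})$ is meromorphically Runge, Proposition~\ref{rungePair}(i) lets me, starting from $f_m:=g$, inductively build $f_l\in\mathscr M(X_l)$ for $l\ge m$ with $\|f_{l+1}-f_l\|_{A_l}<\varepsilon_l$ on suitable compacts $A_l$ satisfying $\sum_l\varepsilon_l<\infty$; here $A_l$ runs through a nested exhaustion, each $A_l$ is taken meromorphically convex in $X_l$, and since $f_l$ is holomorphic on $A_l$ the finiteness of $\|f_{l+1}-f_l\|_{A_l}$ forces $f_{l+1}$ to be holomorphic on $A_l$ as well. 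On any compact that eventually lies inside the $A_l$ and avoids the limiting pole locus, the differences $f_{l+1}-f_l$ are holomorphic and summable, so the sequence converges there to a holomorphic function; keeping $\varepsilon_m$ small guarantees that the limit agrees with $g$ to within $\varepsilon$ on $L$, and in particular is holomorphic on $L$.

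The main obstacle is precisely the passage to the limit: one must verify that $f:=\lim_l f_l$ is a genuine global meromorphic function on $X$ rather than merely a holomorphic function on an increasing union of compacts. This step is unavoidable and essential, because a long $\mathbb C^2$ of this type may carry no nonconstant holomorphic functions, so the limit \emph{must} acquire poles. The difficulty is to control them: at each stage the approximant $f_{l+1}$ introduces new poles in $A_{l+1}\setminus A_l$, and what has to be shown is that the pole divisors $P(f_l)$ stabilize on compact sets, so that $f$ extends meromorphically across its pole locus and defines an element of $\mathscr M(X)$. The natural way to organize this is to measure convergence in the spherical metric on $\mathbb{CP}^1$ and to run the approximation simultaneously for $f_l$ on the region where it is bounded and for $1/f_l$ on a neighborhood of its poles, where $1/f_l$ is holomorphic and small, patching the two approximations. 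Because $A_l$ is meromorphically convex and $\widehat{(A_l)}_{X_l}=\widehat{(A_l)}_{X_{l+1}}$ by Proposition~\ref{rungePair}(ii), the newly created poles are confined to the complement of $A_l$, and the closeness of $f_{l+1}$ to $f_l$ near $\partial A_l$ pins the new pole locus near that of the previous stage; iterating with summable errors should force the pole divisors to converge to a hypersurface and $f$ to be meromorphic on all of $X$. Verifying this convergence of divisors, together with the handling of the codimension-two indeterminacy sets on the surface, is the technical heart of the argument.
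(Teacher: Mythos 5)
Your overall architecture matches the paper's: reduce everything to a single approximation statement (your Main Lemma), derive (a), (b), (c) from it exactly as you do, and prove the lemma by a telescoping construction with summable errors measured in the spherical metric, using Proposition~\ref{rungePair} at each step. The derivations of (a), (b), (c) from the Main Lemma are correct. But the Main Lemma itself is where the entire difficulty lives, and you have not proved it: your last paragraph explicitly defers the ``technical heart'' (``iterating with summable errors \emph{should} force the pole divisors to converge\dots Verifying this convergence of divisors\dots is the technical heart of the argument''). That is precisely the part a proof must supply, and the mechanism you gesture at --- convergence of the pole divisors to a hypersurface --- is not what actually works and is not established by anything you wrote.

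Two concrete ingredients are missing. First, control of the indeterminacy sets. On a surface each $f_l$ has a codimension-two indeterminacy locus $\mathcal I(f_l)$, near which neither $f_l$ nor $1/f_l$ is bounded, so spherical-metric convergence cannot be uniform on compacts meeting these sets; worse, without extra care the sets $\mathcal I(f_l)$ could proliferate and accumulate on a large set as $l\to\infty$. The paper prevents this by writing each approximant as a coprime quotient $f_l/g_l$ (using $H^2(\mathbb C^2,\mathbb Z)=0$), perturbing so that $Z(f_l)$ and $Z(g_l)$ meet transversally, and applying Hurwitz's theorem to one-dimensional slices to show that the number of indeterminacy points inside each fixed ball $\overline B_t$ is bounded and eventually stabilizes; shrinking neighborhoods ($2^{-\ell_k}$ with $\ell_k\nearrow\infty$) around these points then forces $\bigcup_l\mathcal I(f_l)$ to converge to a countable closed set $S$, and the Cauchy estimates in the spherical metric are only required off $S$. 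Second, the extension across $S$: the limit is a priori only a holomorphic map $X\setminus S\to\mathbb{CP}^1$, i.e.\ a meromorphic function on $X\setminus S$, and one needs a removable-singularity theorem --- the paper invokes Chirka's extension theorem~\cite{Ch96} --- to conclude that it extends meromorphically to all of $X$. Without these two steps your limit object is not known to be an element of $\mathscr M(X)$, so the Main Lemma, and with it the theorem, remains unproved.
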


While holomorphically uninteresting, the manifold $X=\bigcup_{j=0}^{\infty}X_j$---the long $\mathbb{C}^2$  constructed by Boc Thaler and Forstneri\v{c}---has the property that a holomorphically convex compact set $K$ in $X_j$ is rationally convex when viewed as a compact set in $X_{j+1}$~\cite[p. 5]{BTFo16}. Since $X_j\cong\mathbb{C}^2$, this implies that $(X_j, X_{j+1})$ is a meromorphically Runge pair by Proposition~\ref{rungePair}(iv) and hence $X$ satisfies the hypotheses of Theorem~\ref{MeroRunge}.

Define the spherical metric on $\mathbb{CP}^1\cong\mathbb C \cup \{\infty\}$ as follows: For two complex points $z,w\in\mathbb{C}$,
\[
	|w,z|=\frac{|w-z|}{\sqrt{1+|w|^2}\sqrt{1+|z|^2}},
\]
while
\[
	|w,\infty|=|\infty,w|=\frac{1}{\sqrt{1+|w|^2}}\qquad\text{and}\qquad |\infty,\infty|=0.
\]

For meromorphic functions without indeterminacy points, convergence of $f_j\to f$ in the spherical metric means precisely that every point admits a neighborhood on which $f_j\to f$ or $1/f_j\to 1/f$ converges uniformly.

\begin{proof}[Proof of Theorem~\ref{MeroRunge}]
	We will first show that $X$ is meromorphically convex. Let $K\subset X$ be compact. Then there exists some $X_k$ in $X=\bigcup_{j=0}^{\infty} X_j$ for which $K\subset X_j$. By relabeling the indices on the collection $\{X_j\}$ and omitting finitely many members if necessary, we can assume $k=0$. Since $X_j$ is meromorphically Runge in $X_{j+1}$ for each $j\geq 0$, we have
\begin{equation}\label{MRunge}
		\widehat{K}_{X_0}=\widehat{K}_{X_j}\quad\text{for all $j\geq 0$.}
\end{equation}

	It is sufficient to show that $\widehat{K}_X\subset\widehat{K}_{X_0}$. Accordingly, fix a point $p\in X\setminus\widehat{K}_{X_0}$. Taking into account~\eqref{MRunge}, we can again relabel if necessary to assume $\{p\}\cup K\subset X_0$ while retaining $p\in X\setminus\widehat{K}_{X_0}$.

	By Proposition~1.2 in~\cite{BoSh25}, there exists a meromorphic function $m_0\in\mathscr{M}(X_0)$ with $m_0\in\mathscr{O}(K)$, $p\not\in\mathcal{I}(m_0)$, and $|m_0(p)|>\|m_0\|_K+\delta$ for some small $\delta>0$. Fix an increasing sequence of nested closed sets $\{\overline{B}_j\}_{j=0}^{\infty}$ of $X$ such that
\begin{itemize}
	\item Each $\overline{B}_j$ is a closed ball in $X_j$ when viewed through the given biholomorphism $X_j\to\mathbb{C}^2$;
	\item $\overline{B}_j$ is compact in $B_{j+1}$ for each $j$;
	\item $\widehat{K}_{X_0}\cup\{p\}\subset B_0$; and
	\item $X=\bigcup_{j=0}^{\infty}\overline{B}_j$.
\end{itemize}
Since $X\cong\mathbb{C}^2$, we can assume, without loss of generality, that $m_{0}$ is rational. Consequently, there exist polynomials $f_0,g_0\in\mathscr{O}(X_0)$ with $\gcd(f_0,g_0)=1$ such that $m_0=f_0/g_0$; that is, $m_0$ is the quotient $f_0$ and $g_0$ with $Z(m_0)=Z(f_0)$ and $P(m_0)=Z(g_0)$. Furthermore, by perturbing $f_0$ and $g_0$ if necessary, we can assume that $Z(f_0)$ and $Z(g_0)$ have only transverse intersection within $B_0$---in particular this means that the intersection multiplicity (c.f.~\cite[Lecture 18]{Ha95}) of $Z(f_0)$ and $Z(g_0)$ at these points is one. Write $\mathcal{I}(m_0)\cap\overline{B}_0=\{s^0_j\}_{j=0}^{N_0}$ and choose a large positive integer $\ell_0$ so that
\begin{itemize}
	\item $\overline{\mathbb{B}}_0(s^0_j,2^{-\ell_0})\cap\overline{\mathbb B}_0(s^0_k,2^{-\ell_0})=\varnothing$ for all $j\neq k$, where $\mathbb{B}_0(q,r)\subset X_0\cong\mathbb{C}^2$ is the (open) ball centered at $q\in X_0$ with radius $r>0$.
	\item $\bigcup_{j=0}^{N_0}\overline{\mathbb B}_0(s_j^0,2^{-\ell_0})\cap\big(\widehat{K}_{X_0}\cup\{p\}\big)=\varnothing$; and
	\item $\overline{\mathbb{B}}_0(s^0_j,2^{-\ell_0})$ contains a portion of precisely one irreducible component of each of $Z(f_0)$ and $Z(g_0)$ for each $j$.
\end{itemize}

$\overline B_0$ is rationally convex in $X_0\cong\mathbb{C}^2$, so~\eqref{MRunge} implies that $\overline{B}_0$ is rationally convex in $X_1\cong\mathbb{C}^2$ as well. In view of the Oka--Weil Theorem~\cite[p. 44]{St08}, $f_0$ and $g_0$ can each be approximated uniformly on $\overline{B}_0$ by rational functions on $X_{1}\cong \mathbb{C}^2$. Accordingly, choose a rational function $m_1\in\mathscr{M}(X_1)$ with
\begin{itemize}
	\item $m_1=f_1/g_1$ for rational $f_1,g_1\in\mathscr{M}(X_1)\cap\mathscr{O}(\overline{B}_0)$;
	\item $\mathcal{I}(m_1)\cap\overline{B}_0\subset\bigcup_{j=0}^{N_0}\overline{\mathbb{B}}_0(s^0_j,2^{-\ell_0})$;
	\item the inequality
\[
	\sup_{w\in\overline{B}_0\setminus\cup_{j=0}^{N_0}\overline{\mathbb{B}}_0(s^0_j,2^{-\ell_0})}|m_1(w),m_0(w)|<1
\]
is satisfied;
\item the inequality
\[
	|m_1(p)|>\|m_1\|_K+\delta
\]
persists; and
\item The hypersurfaces $Z(f_1)$ and $Z(g_1)$ have an intersection multiplicity of one at each of their points of intersection within $\overline{B}_0$.
\end{itemize}

	The last point requires some care.  The hypersurfaces $Z(f_0)$ and $Z(g_0)$ intersect transversely within $\overline{B}_0$, implying that their complex gradients are linearly independent at such points. Normal convergence of holomorphic functions implies normal convergence of their derivatives to the respective derivatives of the limiting functions, so $Z(f_1)$ and $Z(g_1)$ can be chosen to have transverse intersection at these points as well.

	Now, fix $s^0_j\in\mathcal{I}(m)$ and let $d_1$ and $d_2$ be the orders of vanishing of $f_0$ and $g_0$, respectively. Hurwitz's theorem~\cite[Theorem VII.2.5]{Co78} applied to the intersection of complex lines with $Z(f_0)$ and $Z(g_0)$ near $s^0_j$ shows that $Z(f_1)$ and $Z(g_1)$ have $d_j$ and $e_j$ irreducible components (counting multiplicity), respectively, within $\overline{\mathbb{B}}_0(s^0_j,2^{-\ell_0})$. It follows that $m_1=f_1/g_1$ has at most $d_j\cdot e_j$ indeterminacy points, all having intersection multiplicity one, within $\overline{\mathbb{B}}_0(s^0_j,2^{-\ell_0})$.

	Proceeding, we write $\mathcal{I}(m_1)\cap\overline{B}_1=\{s^1_j\}^{N_1}_{j=0}$, and choose an integer $\ell_1>\ell_0$ so that
\begin{itemize}
	\item $\mathbb{B}_1(z,2^{-\ell_1})\subset\mathbb{B}_0(z,2^{-(\ell_0+1)})$ for $z\in\overline{B}_0$, where $\mathbb{B}_1(q,r)\subset X_1\cong\mathbb{C}^2$ now denotes a ball in $X_2\cong\mathbb{C}^2$;
	\item $\overline{\mathbb{B}}_1(s^1_j,2^{-\ell_1})\cap\overline{\mathbb B}_1(s^1_k,2^{-\ell_1})=\varnothing$ for all $j\neq k$;
	\item $\bigcup_{j=0}^{N_1}\overline{\mathbb B}_1(s_j^1,2^{-\ell_1})\cap\big(\widehat{K}_{X_0}\cup\{p\}\big)=\varnothing$; and
	\item $\overline{\mathbb{B}_1}(s^1_j,2^{-\ell_0})$ contains a portion of precisely one irreducible component of each of $Z(f_1)$ and $Z(g_1)$ for each $j$.
\end{itemize}
	We argue as before to find a rational function $m_2\in\mathscr{M}(X_2)$ such that
\begin{itemize}
	\item $m_2=f_2/g_2$ for rational $f_2,g_2\in\mathscr{M}(X_2)\cap\mathscr{O}(\overline{B}_1)$;
	\item $\mathcal{I}(m_2)\cap\overline{B}_1\subset\bigcup_{j=0}^{N_1}\overline{\mathbb{B}}_1(s^1_j,2^{-\ell_1})$;
	\item the inequality
\[
	\sup_{w\in\overline{B}_1\setminus\cup_{j=0}^{N_1}\overline{\mathbb{B}}_1(s^1_j,2^{-\ell_1})}|m_2(w),m_1(w)|<\frac{1}{2}
\]
is satisfied;
\item the inequality
\[
	|m_2(p)|>\|m_2\|_K+\delta
\]
persists; and
\item The hypersurfaces $Z(f_2)$ and $Z(g_2)$ have an intersection multiplicity of one at each of their points of intersection within $\overline{B}_1$.
\end{itemize}

Once again, Hurwitz's theorem asserts that the number of indeterminacy points of $m_2$ within $\overline{B}_0$ is bounded above by $\sum_{j=0}^{N_0}d_j\cdot e_j$. In particular, this implies that after finitely many steps in the inductive process to follow, the number of indeterminacy points contained within $\overline{B}_0$ will stabilize.

We proceed inductively to construct a sequence $m_0,m_1,m_2,\ldots$ of meromorphic functions with respective indeterminacy sets $\{s^0_j\}_{j=0}^{\infty},\{s^1_j\}_{j=0}^{\infty},\{s^2_j\}_{j=0}^{\infty},\ldots$ and a sequence of positive integers $\ell_0<\ell_1<\ell_2<\ldots$ having the following properties for each $k$:
\begin{enumerate}
	\item[(i)] $m_{k+1}=\frac{f_{k+1}}{g_{k+1}}$ for rational $f_{k+1},g_{k+1}\in\mathscr{M}(X_{k+1})\cap\mathscr{O}(\overline{B}_{k})$;
	\item[(ii)]$\mathbb B_{k+1}(z,2^{-\ell_{k+1}})\subset\mathbb{B}_k(z,2^{-(\ell_k+1)})$ for $z\in\overline{B}_k$, where $\mathbb{B}_k(q,r)$ is a ball in $X_k\cong\mathbb{C}^2$;
	\item[(iii)] $\mathcal{I}(m_{k+1})\cap\overline{B}_k\subset\bigcup_{j=0}^{N_k}\overline{\mathbb{B}}_k(s^k_j,2^{-\ell_k})$;
	\item[(iv)] the inequality
	\[
		\sup_{w\in\overline{B}_{k}\setminus\cup_{j=0}^{N_k}\overline{\mathbb{B}}_k(s^k_j,2^{-\ell_k})}|m_{k+1}(w),m_k(w)|<\frac{1}{2^k}
	\]
		holds for each $k$;
	\item[(v)] the inequality
\[
	|m_k(p)|>\|m_k\|_K+\delta
\]
holds for each $k$.
\end{enumerate}

		Fix an integer $t\geq 0$. For large $k$ the number of indeterminacy points of $m_k$ within $\overline{B}_t$ will not change. Thus, for a fixed point of $a_k\in\mathcal{I}(m_k)$ within $\overline{B}_t$, $k$ large, there is a nearby indeterminacy point $a_{k+1}$ of $m_{k+1}$, which, in turn, has a nearby indeterminacy point $a_{k+2}$ of $m_{k+2}$, and so on, yielding a sequence $a_k,a_{k+1},\ldots$. This sequence is Cauchy due to (ii) and (iii), and hence converges to a point $a$. This process shows that $\mathcal{I}(m_k)$ converges in the Hausdorff metric on compact sets to a countable set $S$.

		Furthermore, (iv) shows that $\{m_k\}_{j=0}^{\infty}$ is uniformly Cauchy in the spherical metric on any compact set avoiding $S$. Therefore, viewing the $m_k$ as holomorphic maps from $X\setminus S$ into $\mathbb{CP}^1$, there exists a holomorphic map $m:X\setminus S\to\mathbb{CP}^1$ to which the sequence $\{m_k\}_{k=0}^{\infty}$ converges locally uniformly in the spherical metric. $m$ is thus a meromorphic function on $X\setminus S$, and a result of Chirka~\cite{Ch96} implies that $m$ extends meromorphically to all of $X$. Finally,
\[
	|m(p)|>\|m\|_k
\]
holds due to (v), showing $\widehat{K}_X\subset\widehat{K}_{X_0}$, as desired, showing that $X$ is meromorphically convex.

We next show that condition (b) of Definition~\ref{M-manifold} is satisfied. Choose two distinct points $p,q\in X$. Then there exists a $j$ so that $p,q\in X_j$ and a holomorphic function $h$ on $X_j\cong\mathbb{C}^2$ with $h(p)\neq h(q)$. Through the same process as above, $h$ can be approximated uniformly on the compact set $\{p\}\cup\{q\}$ by members of $\mathscr{M}(X)$. Taking a sufficiently close approximant of $h$ shows that $\mathscr{M}(X)$ separates points.

That condition (c) of Definition~\ref{M-manifold} is satisfied follows a similar argument. Indeed, given a point $p\in X$, we choose a $j$ so that $p\in X_j\cong \mathbb{C}^2$. The coordinates on $\mathbb{C}^2$ give rise to holomorphic functions $h_1,h_2$ on $X_j\subset X$ which serve as local coordinates near $p$. We now approximate $h_1$ and $h_2$ by meromorphic functions on $X$. Since the Jacobian of the map $(h_1,h_2)$ is nonzero near $p$, and normal convergence of holomorphic functions implies normal convergence of their derivatives, the Jacobian of the approximating meromorphic map will also be nonzero near $p$ for a sufficiently close approximant.
\end{proof}

\section{Further results}

In this section we prove some additional results on the structure of meromorphic functions for certain classes of  complex manifolds which are not necessarily Stein.

Recall that a complex manifold $X$ is \textit{1-convex} if $X$ admits a smooth plurisubharmonic exhaustion function $\varphi:X\to\mathbb{R}$ that is strictly plurisubharmonic outside of a compact set $K\subset X$. Equivalently, $X$ is 1-convex if $X$ is holomorphically convex and there exists a compact set $K$ containing all compact analytic varieties of positive dimension in $X$. The smallest such $K$ is called the \textit{exceptional set} of $X$.

It is well known that a holomorphically convex manifold $X$ is Stein if and only if $X$ admits no compact analytic varieties of positive
dimension, see, e.g.,~\cite[Ch. V, Thm 3.1]{FrGr02}.
The first result of this section is to show an analogue of this phenomenon for meromorphically spreadable manifolds. Playing the role of compact analytic varieties in this setting will be compact \textit{meromorphically trivial} varieties, defined as follows:

\begin{definition}
	We say that a complex space $X$ is \textup{meromorphically trivial} if $X$ admits no nonconstant meromorphic functions, that is, if $\mathscr{M}(X)=\mathbb{C}$.
\end{definition}

The existence of such manifolds was discussed in Section~\ref{s.mm}.

\begin{thm}\label{noMeroVarieties}
	Let $X$ be a connected 1-convex manifold which contains no compact, meromorphically trivial analytic subsets of positive dimension. Then $X$ is meromorphically spreadable.
\end{thm}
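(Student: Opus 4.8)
The plan is to use the 1-convex structure to reduce the problem to the Stein case via the Remmert reduction, where meromorphic spreadability is automatic, and then to transport the spreading functions back to $X$. Let $K$ be the exceptional set of $X$, the smallest compact set containing all positive-dimensional compact analytic subvarieties. Since $X$ is 1-convex (hence holomorphically convex), the Remmert reduction $\varphi: X \to Y$ collapses each connected compact positive-dimensional variety to a point, and $Y$ is a Stein space whose singular locus is the finite set $\varphi(K)$; away from $K$, the map $\varphi$ is a biholomorphism. The key point I would establish is that spreadability at a point $p \in X \setminus K$ is essentially free: there $\varphi$ is a local biholomorphism, $Y$ is a Stein manifold near $\varphi(p)$, and holomorphic (hence meromorphic) functions on $Y$ pull back to meromorphic functions on $X$ that separate directions, so the definition of meromorphic spreadability is met at every point off the exceptional set.

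The heart of the argument lies at the finitely many points of the exceptional set. Fix a connected compact positive-dimensional component $V \subset K$, collapsed by $\varphi$ to a point $y_0 \in Y$. By hypothesis $V$ is not meromorphically trivial, so there exists a nonconstant $h \in \mathscr{M}(V)$. The strategy is to extend or produce a meromorphic function on $X$, holomorphic near a chosen point $p \in V$, that is nonconstant along $V$ near $p$; this separates directions tangent to $V$. The remaining directions transverse to $V$ are detected by pulling back local holomorphic coordinates on the Stein space $Y$ at $y_0$ via $\varphi$. I would then combine these two families of functions and run the inductive dimension-reduction argument exactly as in the proof of Proposition~\ref{p.m-m}(ii): starting from the germ of a level variety, I successively adjoin meromorphic functions $f_1, \dots, f_N \in \mathscr{M}(X) \cap \mathscr{O}_p$, each chosen nonconstant on the current germ, so that the germ $V_j = \{z : f_\nu(z) = f_\nu(p),\ \nu \le j\}$ drops in dimension or loses an irreducible component at each step, terminating when the germ is the single point $p$.

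The main obstacle is the middle step: manufacturing a genuinely \emph{global} meromorphic function on $X$ out of a meromorphic function living only on the compact variety $V$. A local nonconstant meromorphic function on $V$ need not extend to a meromorphic function on all of $X$, so I would not attempt a direct extension. Instead I would argue that the existence of a nonconstant $h \in \mathscr{M}(V)$, together with the coherence of $\mathscr{M}_X$ and the 1-convexity of $X$, forces the existence of a global $f \in \mathscr{M}(X)$ that restricts nontrivially to $V$. One clean route is to descend: the collapsed point $y_0$ has a Stein neighborhood in $Y$ on which holomorphic functions exist, but these are constant on $V$ by construction, so they cannot witness tangential variation; the tangential variation must come from a function that does \emph{not} factor through $\varphi$. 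Producing such a function is precisely where the ``not meromorphically trivial'' hypothesis must be leveraged, presumably by invoking the coherence of the direct image $\varphi_* \mathscr{M}_X$ together with a Stein-space version of the Oka--Cartan theorem on $Y$ to realize the nonconstant germ $h$ as the restriction of a global meromorphic function.

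Once such a tangentially-nonconstant global meromorphic function is secured at each point of the finitely many exceptional varieties, the dimension-reduction induction closes the argument uniformly over all of $X$, and I would conclude that $X$ is meromorphically spreadable. I expect the verification that the adjoined functions can be taken holomorphic near $p$ (so that the level-set germs are honest analytic germs and the indeterminacy loci avoid $p$) to require care but to be routine given the earlier basic properties of meromorphic functions and their zero/pole divisors established in the introduction.
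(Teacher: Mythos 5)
Your proposal correctly identifies where the difficulty lies, but it does not resolve it, and the step you leave open is precisely the mathematical content of the theorem. The paper does not route through the Remmert reduction at all: it fixes $x_0\in X$, forms the analytic set $A=\bigcap \overline{f^{-1}(f(x_0))}$ over all $f\in\mathscr{M}(X)$ with $x_0\notin\mathcal{I}(f)$, observes that $A\subseteq\widehat{\{x_0\}}_X$ is compact and that every global meromorphic function is constant on $A$, and then invokes a separate proposition asserting that such an $A$ must be meromorphically trivial --- whence, by hypothesis, zero-dimensional. That proposition is the heart of the matter, and its proof is a Thimm--Siegel--Remmert-type argument: from the exact sequence $0\to\mathcal{J}_A\to\mathscr{M}_X\to\mathscr{M}_X/\mathcal{J}_A\to 0$ one gets a connecting map $\delta$ into $H^1(X,\mathcal{J}_A)$, which is \emph{finite-dimensional} because $X$ is 1-convex (Andreotti--Grauert). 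A nonconstant $m\in\mathscr{M}(A)$ therefore has powers $m,m^2,\dots,m^N$ whose images under $\delta$ are linearly dependent, so some nonconstant polynomial combination $\sum a_j m^j$ has vanishing obstruction and lifts to a global meromorphic function on $X$ that is nonconstant on $A$. Nothing in your sketch supplies this mechanism.

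The specific gap is your appeal to ``the coherence of the direct image $\varphi_*\mathscr{M}_X$ together with a Stein-space version of the Oka--Cartan theorem'' to extend a nonconstant meromorphic function from the compact variety $V$ to $X$. This cannot work as stated: $\mathscr{M}_X$ is not a coherent $\mathscr{O}_X$-module (it is not even locally finitely generated), so Cartan's Theorem B and its Stein-space variants do not apply to it, and in general there \emph{is} a nontrivial cohomological obstruction to extending a meromorphic function off a compact subvariety --- a generic such function does not extend. The theorem is true only because 1-convexity makes the obstruction space finite-dimensional, allowing one to kill the obstruction by passing to a polynomial in $m$ rather than extending $m$ itself. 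Your surrounding scaffolding (spreadability off the exceptional set via the reduction, transverse directions from pulled-back functions, the dimension-reduction induction from Proposition~\ref{p.m-m}(ii)) is reasonable and could be made to work, but without the finite-dimensionality argument the proof does not close.
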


The proof requires a proposition.

\begin{prop}
	Let $X$ be a 1-convex complex manifold and suppose that $A\subset X$ is an irreducible analytic variety with the property that $m|_{A}$ is constant for every $m\in\mathscr{M}(X)$. Then $A$, viewed as a complex space, is meromorphically trivial.
\end{prop}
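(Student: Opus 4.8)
The plan is to pass to the Remmert reduction of $X$ and thereby reduce the statement to an extension problem across the exceptional set. Since a $1$-convex manifold is holomorphically convex, it admits a Remmert reduction $\varphi\colon X\to Y$ onto a normal Stein space $Y$ with $\varphi_*\mathscr O_X=\mathscr O_Y$, where $\varphi$ is biholomorphic off the exceptional set $E$ and contracts $E$ to a finite set. As $\mathscr O(X)=\varphi^*\mathscr O(Y)$ separates the points of $Y$, while by hypothesis every member of $\mathscr O(X)\subset\mathscr M(X)$ is constant on $A$, the image $\varphi(A)$ must be a single point $y_0$. If $y_0\notin\varphi(E)$ then $A=\varphi^{-1}(y_0)$ is a point and there is nothing to prove; otherwise $A\subseteq F:=\varphi^{-1}(y_0)\subseteq E$, so $A$ is compact. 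I will also use that $\varphi$ is a proper modification of a normal space, so that $\varphi^*\colon\mathscr M(Y)\to\mathscr M(X)$ is an isomorphism (meromorphic functions extend across the finite set $\varphi(E)$, of codimension $\ge 2$; the case $\dim X=1$ is vacuous, since then $X$ is Stein and $A$ is a point).

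With this reduction in hand, it suffices to show that \emph{every} $h\in\mathscr M(A)$ is the restriction to $A$ of some global $m\in\mathscr M(X)$ with $A\not\subseteq\mathcal I(m)$; the hypothesis then forces $h=m|_A$ to be constant, i.e.\ $\mathscr M(A)\cong\mathbb C$. Fix a nonconstant $h\in\mathscr M(A)$ and write $h=s_0/s_1$ for holomorphic sections $s_0,s_1$ of a line bundle $L$ on the compact space $A$. I would first extend the triple $(L,s_0,s_1)$ to a neighborhood $U\supseteq A$ in $X$, producing $\widetilde h=\widetilde s_0/\widetilde s_1\in\mathscr M(U)$ with $\widetilde h|_A=h$. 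By properness of $\varphi$, choose a Stein neighborhood $V\ni y_0$ with $\varphi^{-1}(V)\subseteq U$; the isomorphism $\mathscr M(\varphi^{-1}(V))\cong\mathscr M(V)$ lets me write $\widetilde h=\varphi^*g_0$ for some $g_0\in\mathscr M(V)$, locally $g_0=a_0/b_0$ with $a_0,b_0\in\mathscr O_{Y,y_0}$. Since $A$ is compact and the orders of vanishing along $F$ of pullbacks of germs at $y_0$ are bounded, the restriction $\varphi^*(a/b)|_A$ depends only on a finite jet of $a,b$ at $y_0$; using that $Y$ is Stein (Cartan's Theorem~B) I would choose global $a,b\in\mathscr O(Y)$ with the same $N$-jet at $y_0$ for $N$ large, set $g=a/b$, and take $m=\varphi^*g\in\mathscr M(X)$. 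Then $m|_A=h$ is nonconstant, contradicting the hypothesis. Should $\widetilde h$ or $m$ acquire indeterminacy along $A$, which has codimension $\ge 2$ in $X$, Chirka's extension theorem~\cite{Ch96} (already used in the proof of Theorem~\ref{MeroRunge}) guarantees that $m$ is a genuine meromorphic function on all of $X$.

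The main obstacle is the very first step of the construction: the extension of the line bundle $L$ and its sections $s_0,s_1$ from the compact set $A$ to a neighborhood $U$ in $X$. This is a formal-principle type problem, and it is precisely here that $1$-convexity is indispensable. Because $A$ lies in the exceptional set, its normal bundle is weakly negative in the sense of Grauert, and this negativity is what should supply the positivity—equivalently, the cohomology finiteness and vanishing on the $1$-convex neighborhood $U$—needed both to extend $L$ across and to lift $s_0,s_1$ to sections over $U$. Put differently, weak negativity of $A$ forces the germs at $y_0$ that pull back under $\varphi$ to have sufficiently rich leading terms along $A$, so that their ratios already realize all of $\mathscr M(A)$. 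Establishing this extension, and controlling the indeterminacy loci that appear along the way, is the technical heart of the argument; the remaining steps are bookkeeping with the Remmert reduction and Stein interpolation on $Y$.
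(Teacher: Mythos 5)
Your reduction to the Remmert quotient is sound as far as it goes ($\varphi(A)$ is a point, $A$ is compact, $\mathscr M(X)\cong\varphi^*\mathscr M(Y)$), but the proof is not complete: the step you yourself identify as ``the technical heart''---extending the line bundle $L$ and the sections $s_0,s_1$ from the compact space $A$ to a $1$-convex neighborhood $U$ in $X$---is precisely where the real content lies, and you only gesture at it via weak negativity of the normal bundle without carrying it out. Extension of a bundle-plus-sections from a compact analytic subset to an ambient neighborhood is a formal-principle problem that fails in general and, even in the exceptional-set setting, requires a genuine vanishing/finiteness argument that you do not supply. The subsequent jet-truncation step is also unjustified: the claim that $\varphi^*(a/b)|_A$ depends only on a finite jet of $a,b$ at $y_0$ is delicate, because perturbing $b$ within a fixed jet can change the orders of vanishing of $a$ and $b$ along the components of $\varphi^{-1}(y_0)$ containing $A$ and hence change (or destroy) the restriction of the ratio to $A$. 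So as written the argument reduces the proposition to an extension statement that is at least as hard as the proposition itself.

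For comparison, the paper's proof sidesteps all of this with a short cohomological argument in the spirit of Siegel--Serre: letting $\mathcal J_A$ be the sheaf of meromorphic functions vanishing on $A$, the exact sequence $0\to\mathcal J_A\to\mathscr M_X\to\mathscr M_X/\mathcal J_A\to 0$ gives a connecting map $\delta\colon H^0(X,\mathscr M_X/\mathcal J_A)\to H^1(X,\mathcal J_A)$, and $1$-convexity enters only through the finite-dimensionality of $H^1(X,\mathcal J_A)$ (Andreotti--Grauert). Given a nonconstant $m\in\mathscr M(A)$, the images $\delta(m),\delta(m^2),\dots,\delta(m^N)$ must become linearly dependent, so some nonconstant polynomial $Q(m)$ lies in $\ker\delta$ and hence lifts to a global $M\in\mathscr M(X)$ with $M|_A=Q(m)$ nonconstant, contradicting the hypothesis. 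If you want to salvage your approach, the finiteness input you are missing is exactly this kind of statement; absent it, the proposal has a genuine gap.
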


\begin{proof}
By contraposition, it suffices to show that if $A$ admits a nonconstant meromorphic function, then there exists a meromorphic function on $X$ which restricts to a nonconstant meromorphic function on $A$. Let $m\in\mathscr{M}(A)$ be a nonconstant meromorphic function. Let $\mathcal{J}_A$ be the sheaf of meromorphic functions which vanish on $A$. Consider the exact sequence
	\[
		0\longrightarrow\mathcal{J}_A\longrightarrow\mathscr{M}_X\longrightarrow\mathscr{M}_X/\mathcal{J}_A\longrightarrow 0.
	\]
	This induces the exact sequence of \v{C}ech cohomology groups
	\[
		0\longrightarrow H^0(X,\mathcal{J}_A)\longrightarrow H^0(X,\mathscr{M}_X)\longrightarrow H^0(X,\mathscr{M}_X/\mathcal{J}_A)\xlongrightarrow{\delta} H^1(X,\mathcal{J}_A).
	\]
	Since $X$ is 1-convex, the cohomology group $H^1(X,\mathcal{J}_A)$ is finite-dimensional (see Andreotti--Grauert~\cite{AG62}, Narasimhan~\cite{Na61}, Markoe~\cite{Ma81}). Therefore, for some large $N$ the collection $\{m,m^2,\ldots,m^N\}$ will have linearly dependent image through the map $\delta$. This means that there are $a_1,\ldots a_N\in\mathbb{C}\setminus\{0\}$ so that $m':=\sum_{j=1}^{N}a_jm^j$ is in the kernel of $\delta$. Furthermore, $m'$ is nonconstant as well. Indeed, applying Theorem~2.2.9 in~\cite{MaMa} to a single meromorphic function implies that $dm\neq 0$ if and only if $P(m)\neq 0$ for all nontrivial polynomials $P$ of one complex variable. The meromorphic function $m'=Q(m)$, where $Q(z)=\sum_{j=1}^{N}a_jz^j$, is therefore nonconstant.

Because this is an exact sequence, there exists an $M\in\mathscr{M}(X)$ with $M|_A=m'$.
\end{proof}

\begin{proof}[Proof of Theorem~\ref{noMeroVarieties}]
	Let $x_0\in X$ be an arbitrary point. Then the set
\[
	A:=\bigcap_{\substack{f\in\mathscr{M}(X)\\ x_0\not\in\mathcal{I}(f)}} \overline{f^{-1}(f(x_0))}
\]
	is a closed analytic subset of $X$. Clearly, it is contained in $\widehat{\{x_0\}}_X$, the meromorphically convex hull of $\{x_0\}$. Since $\widehat{\{x_0\}}$ must be compact, $A$ is compact. Since $A$ has the property $m|_A$ is constant for every $m\in\mathscr{M}(X)$, the proposition above shows that $A$ is meromorphically trivial. This is possible only if $A$ consists of isolated points. Then there exists an open neighborhood $U$ and meromorphic functions $m_1,\ldots,m_n$ in $U$ such that
\[
	\{x_0\}=A\cap U=\left\{x\in U\,:m_1(x)=\ldots =m_n(x)=0\right\}.
\]
	This implies that  $X$ is meromorphically spreadable.
\end{proof}

Let $X$ be a 1-convex complex manifold, and $S$ be its exceptional set, i.e., the union of all compact complex varieties of positive dimension. By passing to a Remmert reduction, it is clear that any meromorphic function that is constant on the irreducible components of $S$ can be represented as a quotient of two entire functions on $X$. The following theorem gives such representation which in addition is (globally) coprime.

\begin{thm}\label{StrongPoincare}
	Let $X$ be a 1-convex complex manifold, and $S$ be its exceptional set. Suppose that $H^2(X,\mathbb{Z})=0$. Then any $m\in\mathscr{M}(X)$ which is constant on the irreducible components of $S$ admits the representation $m=f/g$ where $f,g\in\mathscr{O}(X)$ and $\gcd(f,g)=1$.
\end{thm}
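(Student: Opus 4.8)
The plan is to reduce the statement to a Poincar\'e (second Cousin) problem and then to transport that problem to the Stein Remmert reduction of $X$. Concretely, it suffices to produce a single $g\in\mathscr{O}(X)$ whose divisor of zeroes equals the polar divisor $P(m)$; for then $f:=mg$ is holomorphic with divisor $Z(m)$, and $\gcd(f,g)=1$ automatically, since the two effective divisors $Z(m)$ and $P(m)$ share no irreducible component (they meet only along $\mathcal I(m)$, which has codimension $\ge 2$) and $X$ is smooth, so each local ring is a UFD. The essential structural input is that $m$ is constant on the irreducible components of $S$: this forces $m$ to be holomorphic (and, after a harmless M\"obius normalization of its constant values, finite and nonzero) near $S$, so that both $P(m)$ and $Z(m)$ are \emph{disjoint from $S$}. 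First I would record these reductions and isolate the remaining task: realize the effective divisor $D:=P(m)$, which avoids $S$, as the zero divisor of a global holomorphic function.

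Next I would invoke the Remmert reduction $\pi\colon X\to Y$, where $Y$ is a Stein space whose only singularities are the finitely many points $\Sigma=\pi(S)$, with $\pi$ biholomorphic over $Y\setminus\Sigma$ and $\pi_*\mathscr{O}_X=\mathscr{O}_Y$. Since $m$ is constant on the fibres of $\pi$, it descends to $\tilde m\in\mathscr{M}(Y)$ whose polar divisor $\tilde D=\pi(D)$ avoids $\Sigma$; a holomorphic $\tilde g$ on $Y$ with $(\tilde g)=\tilde D$ pulls back to the desired $g=\tilde g\circ\pi$. On the Stein space $Y$ one has $H^1(Y,\mathscr{O}_Y)=0$, so the exponential sequence gives $\mathrm{Pic}(Y)\hookrightarrow H^2(Y,\mathbb{Z})$, and by the Oka--Grauert principle the existence of $\tilde g$ is equivalent to the single cohomological condition $c_1\big([\tilde D]\big)=0$ in $H^2(Y,\mathbb{Z})$.

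The hard part is exactly this Chern-class vanishing, where the non-Stein geometry of $X$ enters. Because $\tilde D$ avoids $\Sigma$, the bundle $[\tilde D]$ is holomorphically trivial on a neighbourhood of $\Sigma$, hence is the descent of $[P(m)]$, giving $\pi^*[\tilde D]=[P(m)]$ and therefore $\pi^*c_1([\tilde D])=c_1([P(m)])\in H^2(X,\mathbb{Z})=0$. The obstacle is that $\pi^*\colon H^2(Y,\mathbb{Z})\to H^2(X,\mathbb{Z})$ need not be injective: its kernel is fed by the topology of the exceptional fibres through $R^1\pi_*\mathbb{Z}$, so vanishing of the pullback does not immediately give vanishing of $c_1([\tilde D])$. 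I would resolve this by exploiting triviality of $[\tilde D]$ near $\Sigma$. Writing $Y=U\cup(Y\setminus\Sigma)$ with $U\supseteq\Sigma$ a small Stein neighbourhood and comparing with the decomposition $X=\pi^{-1}(U)\cup(X\setminus S)$, a Mayer--Vietoris argument expresses $c_1([\tilde D])$ as the image of a class on the link $U\setminus\Sigma\cong\pi^{-1}(U)\setminus S$; since $H^1(U)=H^1(\Sigma)=0$ the connecting map on $Y$ is injective, so $c_1([\tilde D])=0$ reduces to showing that this link class extends across $X$, which is precisely what $H^2(X,\mathbb{Z})=0$ delivers through the corresponding exact sequence for $X$. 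I expect this comparison of local cohomology at $\Sigma\subset Y$ with the behaviour along the fibre $S\subset X$ to be the technical heart of the argument.

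Finally I would assemble the conclusion: the vanishing $c_1([\tilde D])=0$ yields $\tilde g\in\mathscr{O}(Y)$ with $(\tilde g)=\tilde D$, and pulling back gives $g=\tilde g\circ\pi\in\mathscr{O}(X)$ and $f=mg\in\mathscr{O}(X)$ with $m=f/g$. Coprimality on $X$ then follows as in the first paragraph, using additionally that $g$ is nowhere zero on a neighbourhood of $S$ (because $\tilde D$ misses $\Sigma$), so no spurious common factor of $f$ and $g$ can be created along the exceptional set.
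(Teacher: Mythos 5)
Your reductions are sound and essentially match the paper's: after a M\"obius normalization both $Z(m)$ and $P(m)$ avoid $S$, it suffices to realize one of these effective divisors as the divisor of a global holomorphic function, and coprimality then follows because the two divisors share no irreducible component and the local rings of $X$ are UFDs (the paper applies this to $Z(m)$ rather than $P(m)$, an immaterial difference). The divergence --- and the gap --- is in how you prove the resulting line bundle is trivial. The paper stays on $X$: from $H^2(X,\mathbb{Z})=0$ it extracts a \emph{continuous} trivialization of the bundle, modifies it to be holomorphic near $S$, and then invokes the Oka--Grauert principle for $1$-convex manifolds of Henkin--Leiterer~\cite{HeLe} to upgrade to a holomorphic trivialization. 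You instead descend to the Remmert reduction $Y$ and attempt to prove $c_1([\tilde D])=0$ in $H^2(Y,\mathbb{Z})$ by a purely topological Mayer--Vietoris comparison. That step does not work as written.

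Concretely, write $L=U\setminus\Sigma\cong\pi^{-1}(U)\setminus S$. The Mayer--Vietoris sequence for $X=\pi^{-1}(U)\cup(X\setminus S)$ together with $H^2(X,\mathbb{Z})=0$ only shows that the link class $\alpha\in H^1(L,\mathbb{Z})$ lies in the image of $H^1(\pi^{-1}(U),\mathbb{Z})\oplus H^1(X\setminus S,\mathbb{Z})$; to conclude $\delta_Y(\alpha)=c_1([\tilde D])=0$ you need $\alpha$ to lie in the image of $H^1(U,\mathbb{Z})\oplus H^1(Y\setminus\Sigma,\mathbb{Z})$. The second summands agree, but the first do not: $H^1(U,\mathbb{Z})=0$ while $H^1(\pi^{-1}(U),\mathbb{Z})\cong H^1(S,\mathbb{Z})$ may be nonzero, and this discrepancy is exactly what feeds $\ker\bigl(\pi^*\colon H^2(Y,\mathbb{Z})\to H^2(X,\mathbb{Z})\bigr)$ --- which is all of $H^2(Y,\mathbb{Z})$ under your hypotheses and is governed by $\operatorname{coker}\bigl(H^1(X,\mathbb{Z})\to H^1(S,\mathbb{Z})\bigr)$. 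Your assertion that ``the connecting map on $Y$ is injective'' is both unjustified (the restriction $H^1(Y\setminus\Sigma,\mathbb{Z})\to H^1(L,\mathbb{Z})$ need not vanish) and counterproductive: injectivity of $\delta_Y$ would force you to prove $\alpha=0$ itself, not merely that it extends. More fundamentally, the implication ``$[\tilde D]$ trivial near $\Sigma$ and $\pi^*[\tilde D]$ topologically trivial on $X$ $\Longrightarrow$ $[\tilde D]$ topologically trivial on $Y$'' is not a purely topological fact about line bundles; it is rescued here only by holomorphy, which is precisely what the Henkin--Leiterer theorem (or an equivalent analytic comparison of line bundles on $X$ near $S$ with those on $Y$ near $\Sigma$) supplies. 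Without such an input at the step you yourself identify as the technical heart, the argument does not close.
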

We require a lemma.
\begin{lem}
	Let $D$ be an effective divisor on a 1-convex complex manifold $X$ which has $H^2(X,\mathbb{Z})=0$. Suppose that $\text{supp}(D)$ avoids the exceptional set $S$ of $X$. Then $D$ is the zero divisor of a holomorphic function on $X$.
\end{lem}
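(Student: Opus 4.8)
The plan is to translate the statement into the language of line bundles and then exploit the fact that $H^2(X,\mathbb{Z})=0$ kills the topological obstruction, leaving only an analytic obstruction that is concentrated near the exceptional set $S$. First I would observe that an effective divisor $D$ determines a holomorphic line bundle $L=\mathscr{O}_X(D)$ together with a distinguished holomorphic section $s$ whose zero divisor is exactly $D$; realizing $D$ as the zero divisor of a global holomorphic function is then equivalent to trivializing $L$, since a trivialization carries $s$ to the desired $f\in\mathscr{O}(X)$ with $Z(f)=D$. Because $\mathrm{supp}(D)$ is closed and avoids the compact set $S$, I can choose an open neighbourhood $W\supset S$ with $\overline{W}\cap\mathrm{supp}(D)=\varnothing$; then $s$ is nowhere zero on $W$, so $L|_W$ is already trivial. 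Feeding $L$ into the exponential sheaf sequence $0\to\mathbb{Z}\to\mathscr{O}_X\to\mathscr{O}_X^{*}\to 0$ and using $H^2(X,\mathbb{Z})=0$ shows that the class $[L]\in H^1(X,\mathscr{O}_X^{*})$ lies in the image of $H^1(X,\mathscr{O}_X)$; thus the entire obstruction to triviality is analytic, and I must show this class vanishes.

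Next I would bring in the Remmert reduction $\pi\colon X\to Y$, where $Y$ is a Stein space and $\pi$ contracts the finitely many components of $S$ to a finite set $F\subset Y$, restricting to a biholomorphism $X\setminus S\to Y\setminus F$. Since $\pi_{*}\mathscr{O}_X=\mathscr{O}_Y$ and $Y$ is Stein, Cartan's Theorem B together with the Leray spectral sequence gives $H^1(X,\mathscr{O}_X)\cong H^0(Y,R^1\pi_{*}\mathscr{O}_X)$, and the coherent sheaf $R^1\pi_{*}\mathscr{O}_X$ is supported on the finite set $F$. Choosing $W=\pi^{-1}(V)$ for a sufficiently small Stein neighbourhood $V$ of $F$ (small enough that $W$ still misses $\mathrm{supp}(D)$), the same computation on $W\to V$ shows that the restriction $H^1(X,\mathscr{O}_X)\to H^1(W,\mathscr{O}_X)$ is an isomorphism. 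In other words, the analytic part of $\mathrm{Pic}(X)$ is detected entirely by the behaviour near $S$.

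With these two facts in hand I would run a diagram chase. Comparing the exponential sequences of $X$ and of $W$ through the restriction maps, and using that $L|_W$ is trivial, I obtain that the $H^1(X,\mathscr{O}_X)$-class representing $[L]$ restricts, under the above isomorphism, into the image of $H^1(W,\mathbb{Z})$. Since the restriction on $\mathscr{O}$-cohomology is an isomorphism, it remains only to see that the integral lattices match, i.e.\ that $H^1(X,\mathbb{Z})\to H^1(W,\mathbb{Z})\cong H^1(S,\mathbb{Z})$ is surjective; granting this, the representing class lies in the image of $H^1(X,\mathbb{Z})$, hence $[L]=0$, so $L$ is trivial and $s$ becomes the sought-after $f$. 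The required surjectivity is precisely the vanishing of the first differential in the integral Leray five-term sequence $0\to H^1(Y,\mathbb{Z})\to H^1(X,\mathbb{Z})\to H^0(Y,R^1\pi_{*}\mathbb{Z})\xrightarrow{d_2}H^2(Y,\mathbb{Z})\to H^2(X,\mathbb{Z})$, and it is here that the hypothesis $H^2(X,\mathbb{Z})=0$ must be invoked.

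I expect this last topological comparison to be the main obstacle: one must pin down the interaction between $H^2(X,\mathbb{Z})=0$, the contraction $\pi$, and the local cohomology of $Y$ at the singular points $F$ (equivalently, control $H^2(Y,\mathbb{Z})$ and the edge map $\pi^{*}$). An alternative route, which repackages the same difficulty on the Stein side, is to push $D$ forward to an effective Cartier divisor $\widetilde{D}=\pi_{*}D$ on $Y$ avoiding $F$, solve the second Cousin problem for $\widetilde{D}$ on the Stein space $Y$—whose only obstruction is $c_1(\mathscr{O}_Y(\widetilde{D}))\in H^2(Y,\mathbb{Z})$, since $H^1(Y,\mathscr{O}_Y)=0$—and then pull the resulting function back by $\pi$; here one argues that the obstruction class vanishes because it restricts to zero both on $Y\setminus F$ (as the pullback of $c_1(\mathscr{O}_X(D))=0$) and on a neighbourhood of $F$ (where $\widetilde{D}$ is empty). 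Either way, the crux is the same cohomological vanishing, and the geometric input that makes it work is exactly that $\mathrm{supp}(D)$ avoids $S$, which forces $L$ to be trivial in a full neighbourhood of the exceptional set.
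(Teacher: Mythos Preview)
Your diagram chase has a genuine gap at exactly the point you flag. From the five-term sequence
\[
H^1(X,\mathbb{Z})\longrightarrow H^0(Y,R^1\pi_*\mathbb{Z})\xrightarrow{\ d_2\ }H^2(Y,\mathbb{Z})\xrightarrow{\ \pi^*\ }H^2(X,\mathbb{Z})=0,
\]
the hypothesis $H^2(X,\mathbb{Z})=0$ tells you only that $d_2$ is \emph{surjective}, not that it vanishes; so it does not give the surjectivity of $H^1(X,\mathbb{Z})\to H^1(W,\mathbb{Z})\cong H^1(S,\mathbb{Z})$ that your chase requires. Worse, the long exact sequence of the pair $(X,S)$ together with $H^2(X,\mathbb{Z})=0$ identifies $H^2(Y,\mathbb{Z})$ with the cokernel of $H^1(X,\mathbb{Z})\to H^1(S,\mathbb{Z})$, so asking $d_2=0$ is literally the same as asking for that surjectivity: the argument is circular. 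Your alternative route has the same defect: knowing that $c_1(\mathscr{O}_Y(\widetilde D))$ restricts to zero on $Y\setminus F$ and on a neighbourhood of $F$ does not force it to vanish in $H^2(Y,\mathbb{Z})$; Mayer--Vietoris only says such a class comes from $H^1$ of the overlap.

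The paper avoids this comparison altogether by working with the \emph{continuous} exponential sequence. Since $\mathscr{C}$ is soft one has $H^1(X,\mathscr{C})=0$, and together with $H^2(X,\mathbb{Z})=0$ this gives $H^1(X,\mathscr{C}^*)=0$ outright: the line bundle is topologically trivial, i.e.\ the cocycle $g_{jk}=f_j/f_k$ splits continuously as $c_j/c_k$. The hypothesis $\mathrm{supp}(D)\cap S=\varnothing$ is then used, not cohomologically, but to modify the continuous splitting so that it is already \emph{holomorphic} on a neighbourhood of $S$. Finally, an Oka-principle theorem of Henkin--Leiterer for $1$-convex manifolds upgrades this to a holomorphic splitting on all of $X$. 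Thus the paper trades your delicate $\mathbb{Z}$-cohomology comparison near $S$ for an analytic input specific to the $1$-convex setting; that analytic input is the missing ingredient in your approach.
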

\begin{proof}
	Choose local defining functions $f_j\in\mathscr{O}(U_j)$ over some open cover $\{U_j\}_{j=1}^{\infty}$ of $\text{supp}(D)$. By shrinking the elements of the cover if necessary, we can assume that $\bigcup_{j}U_j$ avoids $S$. Choose an open set $U_0$ so that $\{U_j\}_{j=0}^{\infty}$ is an open cover of $X$ with $S\subset U_0$. Likewise choose $f_0\in\mathscr{O}(U_0)$ to be $f_0\equiv 1$. Then $g_{jk}=f_j/f_k\in\mathscr{O}^*(U_j\cap U_k)$ represents a member of $H^1(X,\mathscr{O}^*)$, the Picard group of $X$.

	On the other hand, let $\mathscr{C}$ and $\mathscr{C}^*$ denote the additive and multiplicative sheaf of germs of complex-valued continuous functions and nonzero continuous functions, respectively. Consider the exact sequence
\[
	0\longrightarrow\mathbb{Z}\longrightarrow\mathscr{C}\xlongrightarrow{\exp(2\pi i(\cdot))}\mathscr{C}^*\longrightarrow 0.
\]
	This induces the exact sequence of \v{C}ech cohomology groups
\[
\ldots\longrightarrow H^1(X,\mathscr{C})\longrightarrow H^1(X,\mathscr{C}^*)\longrightarrow H^2(X,\mathbb{Z})\longrightarrow\ldots.
\]
	Since $H^1(X,\mathscr{C})=0$ for any paracompact space, and $H^2(X,\mathbb{Z})=0$ by assumption, we see that $H^1(X,\mathscr{C}^*)=0$. This means that $\{g_{jk}\}$ represents a trivial cohomology class in $H^1(X,\mathscr{C}^*)$. Therefore there exist $c_j\in\mathscr{C}^*(U_j)$ such that $c_j/c_k=g_{jk}$ in $U_j\cap U_k$ for all $j,k$.

	We now modify $c_0$. Let $V_0$ be an open set with $S\subset V_0\subset\subset U_0\setminus\bigcup_{j=1}^{\infty}U_j$ and let $\chi$ be a smooth function with compact support in $U_0\setminus \bigcup_{j=1}^{\infty}U_j$ that is identically equal to one on $V_0$. Choose $h\in\mathscr{O}(U_0)$ and define $\tilde{c}_0=\chi h+(1-\chi)c_0$. Then $\tilde{c}_0$ is holomorphic in a neighborhood of $S$. We likewise define $\tilde{c}_j=c_j$ for $j=1,2,\ldots$. We see that $\{\tilde{c}_j\}_{j=0}^{\infty}$ is a solution to $\tilde{c}_j/\tilde{c}_k=g_{jk}$ on $U_j\cap U_k$ that is holomorphic in a neighborhood of $S$, so the work of Henkin--Leiterer~\cite{HeLe}
	shows there exist $h_j\in\mathscr{O}^*(U_j)$ such that $h_j/h_k=g_{jk}$ for all $j,k$. Equivalently, $\{g_{jk}\}$ represents the trivial bundle in $H^1(X,\mathscr{O}^*)$. We conclude that $D$ is the zero divisor of a holomorphic function on $X$.
\end{proof}
\begin{proof}[Proof of Theorem~\ref{StrongPoincare}]
	Let $m$ be a meromorphic function which is constant on the irreducible components of $S$. By adding the appropriate constant, we may assume that $m$ is nonzero on $S$. Write $\text{div}(m)=Z(m)-P(m)$, where $Z(m)$ and $P(m)$ denote the (effective) divisors of zeroes and poles of $M$, respectively. Note that $Z(m)$ is an effective divisor whose support avoids $S$, so by the lemma above, $Z(m)$ is principal---that is, there exists a $f\in\mathscr{O}(X)$ with $\text{div}(f)=Z(m)$. Therefore $g:=f/m$ is a holomorphic function on $X$ with $\text{div}(g)=P(m)$, and we conclude that $m=f/g$ is the desired representation.
\end{proof}

At present, no example of a 1-convex manifold $X$ with $H^2(X,\mathbb{Z})=0$ is known. However, there is no known obstruction to the existence of such examples. In particular, Bassanelli and Leoni~\cite{BaLe07} construct 1-convex manifolds whose exceptional set is a closed complex curve that is null-homologous.
\printbibliography

\end{document}